\theoremstyle{plain}
\newtheorem{theorem}{Theorem}[section]
\newtheorem{proposition}[theorem]{Proposition}
\newtheorem{lemma}[theorem]{Lemma}
\newtheorem{corollary}[theorem]{Corollary}
\theoremstyle{definition}
\newtheorem{definition}[theorem]{Definition}
\newtheorem{remark}[theorem]{Remark}
\theoremstyle{remark}
\numberwithin{equation}{section}
\numberwithin{theorem}{section}
\DeclareMathOperator\Disc{Disc}
\DeclareMathOperator\Res{Res}
\DeclareMathOperator\Gal{Gal}
\newcommand\Q{\mathbb{Q}}
\newcommand\calN{\mathcal N}
\newcommand\calD{\mathcal D}
\newcommand\calO{\mathcal O}
\newcommand\scg{\mathcal P}
\renewcommand\O{\mathcal O}
\newcommand\ol{\overline}
\newcommand\NnK[2]{N_{n, K}(#1; #2)}
\newcommand\NnQ[2]{N_{n, \mathbb{Q}}(#1; #2)}
\newcommand\NdQ[3]{N_{#1, \mathbb{Q}}(#2; #3)}
\newcommand\NdK[3]{N_{#1, K}(#2; #3)}
\newcommand\GnK[2]{\scg_{n,K}(#1; #2)}
\newcommand\FnK[2]{\mathcal{F}_{n,K}(#1; #2) }
\title{
Improved lower bounds for the number of fields with alternating Galois group
}
\author{Aaron Landesman}
\address{Department of Mathematics, Stanford University, Stanford, CA 94305}
\email{aaronlandesman@stanford.edu}
\author{Robert J. Lemke Oliver}
\address{Department of Mathematics, Tufts University, Medford, MA 02155}
\email{robert.lemke{\_{}}oliver@tufts.edu}
\author{Frank Thorne}
\address{Department of Mathematics, University of South Carolina, Columbia, SC 29208}
\email{thorne@math.sc.edu}
\def\listtodoname{List of Todos}
\def\listoftodos{\@starttoc{tdo}\listtodoname}
\begin{document}

\begin{abstract}
	Let $n \geq 6$ be an integer.  We prove that the number of number fields with Galois group $A_n$ and absolute discriminant at most $X$ is asymptotically at least $X^{1/8 + O(1/n)}$.
	 For $n \geq 8$
	 this improves upon the previously best known lower bound of $X^{(1 - \frac{2}{n!})/(4n - 4) - \epsilon}$, due to Pierce, Turnage-Butterbaugh, and Wood.
\end{abstract}
\maketitle

\section{Introduction}

For any number field $K$, any integer $n \geq 2$, any real number $X$, and any transitive subgroup $G$ of the symmetric group $S_n$, let 
\begin{equation}\label{eq:defF}
	\FnK{G}{X} := \{ L/K: [L:K]=n, \mathrm{Gal}(\widetilde{L}/K) \simeq G, \ | \calN_{K/\Q}(\calD_{L/K}) | \leq X \},
\end{equation}
where $\widetilde{L}$ denotes the Galois closure of $L$ over $K$, $\mathcal D_{L/K}$ is the relative discriminant of $L$ over $K$, and $\calN_{K/\Q}$ denotes the norm map.  
Define
\begin{equation}\label{eq:defN}
\NnK{G}{X} := \#\FnK{G}{X}.
\end{equation}  
Our main result is the following bound on $\NnK{A_n}X$, the number of extensions whose Galois closure is the alternating group.

\begin{theorem}\label{thm:an-lower-bound}
Let $n \geq 6$ be an integer, with $n \neq 7$, and let $A_n$ denote the alternating group on $n$ elements.  
For any number field $K$, as $X\to \infty$, we have 
\[
	\NnK{A_n}{X} \gg \begin{cases} X^{\frac{(n-4)(n^2 - 4)}{8(n^3 - n^2)}}
 & \text{if } n \text{ is even}, \\ X^{\frac{(n - 7)(n + 2)}{8n^2}} & \text{if } n \text{ is odd}. \end{cases}
\]

\end{theorem}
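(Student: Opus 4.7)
The plan is to construct many $A_n$-extensions of $K$ by counting integer specializations of an explicit polynomial family whose generic Galois group is $A_n$. Specifically, I would look for a polynomial $f(x;\vec t) \in \O_K[\vec t][x]$, monic of degree $n$ in $x$, in parameters $\vec t = (t_1,\ldots,t_m)$, with the properties that (i) $\mathrm{disc}_x f(x;\vec t) = g(\vec t)^2$ is identically a polynomial square (necessary for the generic Galois group to lie inside $A_n$), and (ii) the generic Galois group of $f$ over $K(\vec t)$ is the full group $A_n$. The family should be chosen to maximize the ratio $m/D$, where $D := \deg_{\vec t}\mathrm{disc}_x f$; the exponent in the theorem demands $m/D \approx 1/8$ as $n \to \infty$. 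A low-dimensional family, such as the curve cut out by imposing that a trinomial $x^n + ax + b$ have square discriminant, yields $m/D$ of order $1/n$ or smaller, so a genuinely higher-dimensional family is essential. The separate expressions for $n$ even and $n$ odd strongly suggest that the construction depends on the parity of $n$, most likely through the sign $(-1)^{n(n-1)/2}$ in the relation between $\mathrm{disc}_x f$ and $\Res(f,f')$, which forces different auxiliary squares in the two parity classes.

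Given such a family, an effective Hilbert irreducibility theorem over $K$ (in the style of Cohen, or of the quantitative sieves of Lemke Oliver--Thorne) ensures that all but $O(H^{m-1/2+\epsilon})$ of the $\gg H^m$ specializations $\vec\lambda \in \O_K^m$ with $|\vec\lambda|_\infty \le H$ yield $f(x;\vec\lambda)$ with Galois group exactly $A_n$. For each such good specialization, $\mathrm{disc}_x f(x;\vec\lambda) \ll H^D$, and therefore the relative discriminant $\calD_{L_\lambda/K}$ of $L_\lambda := K[x]/(f(x;\vec\lambda))$ satisfies $\|\calN_{K/\Q}\calD_{L_\lambda/K}\| \le X$ once $H = c X^{1/D}$. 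This produces $\gg X^{m/D}$ candidate polynomials. By bounds of Evertse--Gy\H{o}ry and Ellenberg--Venkatesh on the number of integral generators of a fixed $A_n$-field of bounded height (modulo the shift and unit actions of $\O_K$), each such field is the splitting field of at most $X^{o(1)}$ of the candidate polynomials, yielding $\NnK{A_n}{X} \gg X^{m/D - o(1)}$.

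The main obstacle, containing essentially all the quantitative content of the theorem, is the family construction of the first step. Producing a family with $m/D$ equal to $(n-4)(n^2-4)/(8n^2(n-1))$ when $n$ is even and $(n-7)(n+2)/(8n^2)$ when $n$ is odd requires engineering the square-discriminant identity $\mathrm{disc}_x f = g^2$ polynomially in a delicate, parity-sensitive way; the vanishing factors $(n-4)$ and $(n-7)$ in the exponents signal that the construction degenerates at small $n$, consistent with the hypotheses $n \ge 6$ and $n \ne 7$. Verifying that the generic Galois group of the resulting family is the full $A_n$, and not a proper transitive subgroup, is an additional technical check, typically accomplished by exhibiting specializations whose Galois groups together generate $A_n$.
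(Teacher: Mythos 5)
Your overall architecture (a parametric family with identically square discriminant and generic Galois group $A_n$, Hilbert irreducibility to keep the specialized Galois group, then a count of specializations divided by a multiplicity) is the right one, and matches the paper's strategy in outline. But there are two genuine gaps. The first is the multiplicity step: the claim that each $A_n$-field arises from at most $X^{o(1)}$ of the candidate polynomials is not justified and is false as stated. The relevant quantity is the number of degree-$n$ algebraic integers of height at most $Y$ generating a fixed field $L$, and this can be as large as $\max\{Y^{nd}|\mathrm{Disc}(L)|^{-1/2},\,Y^{nd/2}\}$ --- a positive power of $X$, not $X^{o(1)}$. The Evertse--Gy\H{o}ry results you invoke control \emph{monogenizations} (ring generators up to affine equivalence, i.e.\ solutions of index form equations), which is not the multiplicity that occurs here; translating $\alpha$ by elements of $\O_K$ alone already produces $\asymp Y^{d}$ generators of the same field. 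The paper resolves this by combining the multiplicity bound above (its Lemma 2.7, from Lemke Oliver--Thorne) with the Schmidt \emph{upper} bound $\NnK{A_n}{X}\ll X^{(n+2)/4}$, splitting fields by discriminant size so that the few fields of small discriminant cannot absorb too many generators (its Proposition 2.8). This balancing is where the exponents actually come from: with $\#\GnK{A_n}{Y}\gg Y^{dC}$ one gets $\NnK{A_n}{X}\gg X^{(C-n)(n+2)/(n^3-n^2)}$, and the factors $(n-4)$ and $(n-7)$ arise from the subtraction $C-n$, \emph{not} from any degeneration of the family at small $n$ (indeed the paper notes the construction works perfectly well for $n=7$ but the resulting exponent is trivial). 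Without some version of this balancing your argument only counts polynomials, not fields.

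The second gap is the one you flag yourself: the family is never constructed, and it carries half the content. For the record, the paper uses Hilbert's original families --- for even $n$, monic $f$ with $f'=n(x-a)h(x)^2$ where $\deg h=n/2-1$, perturbed by adding a constant $(-1)^{n/2}t^2$; for odd $n$, monic $f$ with $xf'-f=(n-1)(x-a)h(x)^2$ where $\deg h=(n-1)/2$, perturbed in the linear coefficient --- together with resultant identities showing the discriminant is a square times an explicit factor, and an inertia/ramification argument over $K(t)$ to pin down the Galois group as exactly $A_n$. Two further points where your accounting would not reproduce the stated exponents: the parameters must be varied in a \emph{weighted} box ($\|\alpha_i\|\ll Y^{i}$ for the coefficient of weight $i$, $\|\tau\|\ll Y^{n/2}$, etc.), not a hypercube, in order to achieve $C=(n^2+2n+8)/8$ (even) or $(n^2+7)/8$ (odd) while keeping the polynomial height $\ll Y$ and hence $\mathrm{Disc}\ll Y^{dn(n-1)}$; and one must separately check that the map from parameter tuples to polynomials has bounded fibers (the paper shows it is at most $2$-to-$1$ by recovering $a$, then $h$, then $\tau$ from the specialized polynomial).
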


The proof, given in \autoref{section:proof}, is inspired by Ellenberg and Venkatesh's lower bounds \cite{EllenbergVenkatesh} on $\NnK{S_n}{X}$.
Their strategy was to count degree $n$ polynomials with a height bound on the coefficients, sufficient to guarantee that these polynomials generate fields whose discriminant
is bounded as in \eqref{eq:defF}.
Hilbert's irreducibility theorem guarantees that almost all of them generate $S_n$-extensions, and Ellenberg and Venkatesh applied estimates from the geometry of numbers 
to bound the multiplicity with which each number field was thus counted.

What we do here is to import this machinery into Hilbert's original construction of $A_n$-polynomials \cite[pp. 126-127]{Hilbert}. Indeed, 
\cite{Hilbert}
is the very paper in which Hilbert proved his irreducibility theorem.
Let $r := \lfloor \frac{n-1}{2} \rfloor$.
Following Hilbert, we construct a univariate polynomial $F$ over a function field $K(a_1,\dots,a_r,t)$ whose Galois group over 
	$K(a_1, \ldots, a_r,t)$	
	is $A_n$. As Hilbert argued, there are infinitely many specializations of the $a_i$ and $t$ for which the resulting polynomial has Galois group $A_n$ over $K$,
	and following Ellenberg and Venkatesh we quantify how many such specializations there are and how many $A_n$-fields they produce. 

\subsection{Discussion of the main result}
The asymptotic behavior of $\NnK{A_n}{X}$ is known only when $n=3$.  In particular, if $K$ does not contain third roots of unity, then $\NdK{3}{A_3}{X} \sim c_K X^{1/2}$ for a positive constant $c_K$, while if $K$ does contain third roots of unity, then $\NdK{3}{A_3}{X} \sim c_K X^{1/2} \log X.$  When $K= \mathbb{Q}$, this follows from Cohn \cite{cohn}, and for general $K$ from Wright \cite[Theorem 1.1]{wright}, who proved an asymptotic formula for $\NnK{G}{X}$ whenever $G$ is abelian.  For $n\geq 4$, as yet unproved cases of Malle's conjecture \cite{malle}
predict an asymptotic formula for $\NnK{A_n}{X}$.  When $n=4$, this prediction states that $\NdK{4}{A_4}{X} \sim c_K X^{1/2} (\log X)^2$ if 
$K$ contains third roots of unity, and that $\NdK{4}{A_4}{X} \sim c_K X^{1/2} \log X$ if 
$K$ does not contain third roots of unity.
For all $n \geq 5$ and all number fields $K$ Malle's conjecture predicts that $\NnK{A_n}{X} \sim c_K X^{1/2} \log X$. When $n \geq 5$, Malle's prediction does not depend on whether $K$ contains third roots of unity.

For lower bounds on $\NnK{A_n}{X}$, Baily \cite{baily} proved that $\NdQ{4}{A_4}{X} \gg X^{1/2}$. For $n > 4$,
Pierce, Turnage-Butterbaugh, and Wood \cite[Theorem 2.6]{PTBW} proved that $\NnQ{A_n}{X} \gg X^{\beta_n - \epsilon}$ with
\begin{equation}\label{eq:PTBW}
\beta_n := \frac{1 - \frac{2}{n!}}{4n - 4}.
\end{equation}
This result is stronger than ours for $n = 6$, 
and is, to our knowledge, the only known quantitative lower bound for $n = 5$ and $n=7$. 
There is no theoretical
obstruction to our method working in the case $n = 7$, but in this case it
yields trivial results.
For $n \geq 8$, our results improve upon those of \cite{PTBW}, and are, to our knowledge, the only bounds stated in the literature for $K \neq \mathbb{Q}$.

Upper bounds on $\NnK{A_n}{X}$ are also known. Indeed, these will be an ingredient in our proof. 
For $n = 4$ and $n = 5$ the sharpest known bounds are those in \cite{BSTTTZ} and \cite{BCT} 
respectively. For $n \geq 6$ we have the Schmidt
bound \cite{schmidt} 
\begin{equation}\label{eq:schmidt}
	\NnK{G}{X} \ll X^{\frac{n + 2}{4}},
\end{equation} 
which holds for arbitrary subgroups $G \subset S_n$. When $G = A_n$ and $K=\mathbb{Q}$, Larson and Rolen \cite{LarsonRolen} obtained an upper bound that is smaller by a factor of about $X^{1/4}$.
For large $n$, the Schmidt bound 
was improved by Ellenberg and Venkatesh \cite{EllenbergVenkatesh} 
to $\NnK{G}{X} \ll X^{\exp(C \sqrt{\log n})}$ with a constant $C$ that may be made explicit.
This upper bound was further improved to
$\NnQ{G}{X} \ll X^{C \log^3 n}$ for a suitable constant $C$
by 
Couveignes
in 
\cite{couveignes2019enumerating}
and then to 
$\NnQ{G}{X} \ll X^{C \log^2 n}$ by the second and third authors \cite{LOT}; further,
by counting degree $n$ extensions $L/K$ as degree $n[K:\Q]$ extensions over $\Q$,
one obtains bounds for $\NnK{G}{X}$ with $C$ now depending on $[K : \Q]$.


\begin{remark}
It is possible to slightly improve \autoref{thm:an-lower-bound} under certain hypotheses that are known for large $n$.  See \autoref{prop:best-possible} at the end of this note.
\end{remark}

Our methods also allow us to obtain a lower bound for the number of $S_n$ extensions with a fixed quadratic resolvent.  Recall that the quadratic resolvent of an $S_n$ extension $L/K$ is the unique quadratic extension $M/K$ contained in the normal closure $\tilde{L}/K$.  For any quadratic extension $M/K$, define
\[
	\NnK{S_n,M}{X}
		:= \#\{ L \in \FnK{S_n}{X} : M \subseteq \tilde{L}/K\}.
\]

\begin{theorem}\label{thm:sn-lower-bound}
Let $n \geq 6$ be an integer, with $n \neq 7$. For any number field $K$ and any quadratic extension $M/K$, as $X\to \infty$, we have 
\[
	\NnK{S_n,M}{X} \gg \begin{cases} X^{\frac{(n-4)(n^2 - 4)}{8(n^3 - n^2)}}
 & \text{if } n \text{ is even}, \\ X^{\frac{(n - 7)(n + 2)}{8n^2}} & \text{if } n \text{ is odd}. \end{cases}
\]
\end{theorem}


\subsection{Acknowledgements}
This work was conceived at the Palmetto Number Theory Series conference in Columbia, SC in December, 2018. We 
thank Matthew Boylan and Michael Filaseta for organizing the conference, and the National Science Foundation (under Grant No. DMS-1802259) 
and the National Security Agency
(under Grant No. H98230-16-1-0247) for funding it. 
We also thank an anonymous referee for a particularly careful reading and for suggesting numerous improvements.

AL was supported by the National Science Foundation Graduate Research Fellowship Program under Grant No. DGE-1656518.  RJLO was supported by National Science Foundation grant, DMS-1601398.  FT was supported by grants from the Simons Foundation (Nos. 563234 and 586594).

\section{Preliminaries}

\subsection{Notation}
\label{subsection:notation}
We begin by fixing a base number field $K$ with $[K : \Q] = d$ and an integer $n \geq 2$.
differ from line to line.
Throughout, any constants implied by the notation $\gg$, $\ll$, and $O(-)$ will be allowed to depend on these quantities.

For the purposes of this paper, we define the {\itshape height} of 
a monic polynomial $f := x^n + c_1 x^{n-1} + \dots + c_n \in \calO_K[x]$ by
\begin{equation}\label{def:height}
\mathrm{ht}(f) := \max \|c_i\|^{1/i},
\end{equation}
where, as in \cite{EllenbergVenkatesh}, for any algebraic number $\alpha$ we write $\|\alpha\|$ for 
the largest Archimedean valuation of $\alpha$.

Finally, it will be convenient to introduce a parameter $Y$, depending on $X, d,$ and $n$, given by
\begin{equation}
Y := X^{1/dn(n-1)}.
	\label{y-defn}
\end{equation}

\subsection{Discriminants and resultants}
For the convenience of the reader, we next review some well known facts about discriminants and resultants.
\begin{definition}
	\label{definition:resultant}
	Let $R$ be an integral domain ring with fraction field $K(R)$ whose algebraic closure
	is denoted $\overline{K(R)}$.
	Given polynomials 
$f := c_0 x^n + \dots + c_n \in R[x]$ and $g := b_0 x^m + \dots + b_m \in R[x]$, 
	the {\em resultant} $\mathrm{Res}(f,g)$ of $f$ and $g$ is defined by
\begin{align}\label{eq:resultant}
	\mathrm{Res}(f,g) & := c_0^m b_0^n \prod_{\substack{\alpha,\beta \\ f(\alpha)=0 \\ g(\beta)=0}} (\alpha-\beta) \\ \label{eq:result}
	& = (-1)^{nm} b_0^n \prod_{g(\beta) = 0} f(\beta),
	\end{align}
	where the product runs over roots $\alpha$ of $f$ and roots $\beta$ of $g$ in $\overline{K(R)}$, counted with multiplicity.  

The {\em discriminant} of $f$ is
\begin{equation}\label{definition:discriminant}
\Disc(f) := \frac{(-1)^{\frac{n(n-1)}{2}}}{c_0}\Res(f,f'),
\end{equation}
where $f'$ is the derivative of $f$.
\end{definition}

\begin{lemma}\label{lem:disc-res}
	Let $R$ be an integral domain with fraction field $K(R)$ whose algebraic closure is denoted $\overline{K(R)}$. Let $f = c_0 x^n + \dots + c_n \in R[x]$ be a polynomial with roots $\alpha_1, \ldots, \alpha_n$ in $\overline {K(R)}$.  Then
\[
\mathrm{Disc}(f) = 
(-1)^{n(n-1)/2} n^n c_0^{n-1} \prod_{\beta: f^\prime(\beta)=0} f(\beta)
= c_0^{2n-2} \prod_{1 \leq i < j \leq n}(\alpha_i - \alpha_j)^2
\]
\end{lemma}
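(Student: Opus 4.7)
The plan is to derive both formulas directly from the two expressions for $\Res(f,g)$ given in \eqref{eq:resultant} and \eqref{eq:result}, specialized to $g = f'$. Throughout, I use that $f'$ has degree $n-1$ and leading coefficient $nc_0$.

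For the first equality, I would apply \eqref{eq:result} with $g = f'$, so that in the notation of \autoref{definition:resultant} we have $m = n-1$ and $b_0 = nc_0$. Since $n(n-1)$ is always even, $(-1)^{n(n-1)} = 1$, giving
\[
\Res(f, f') = (nc_0)^n \prod_{f'(\beta) = 0} f(\beta) = n^n c_0^n \prod_{f'(\beta) = 0} f(\beta).
\]
Substituting this into \eqref{definition:discriminant} and cancelling one power of $c_0$ yields the first formula immediately.

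For the second equality, I would instead use the companion identity $\Res(f,g) = c_0^m \prod_{f(\alpha)=0} g(\alpha)$, which follows from \eqref{eq:resultant} by grouping the product of $(\alpha-\beta)$ over roots of $f$ rather than over roots of $g$. Applied with $g = f'$, this gives $\Res(f, f') = c_0^{n-1} \prod_{i=1}^n f'(\alpha_i)$. Differentiating $f(x) = c_0 \prod_i(x - \alpha_i)$ via the product rule, all but one summand vanishes at $x = \alpha_j$, leaving $f'(\alpha_j) = c_0 \prod_{i \neq j}(\alpha_j - \alpha_i)$. Multiplying over $j$ and pairing $(\alpha_i - \alpha_j)(\alpha_j - \alpha_i) = -(\alpha_i - \alpha_j)^2$ for each of the $\binom{n}{2}$ unordered pairs gives
\[
\prod_{j=1}^n f'(\alpha_j) = c_0^n (-1)^{n(n-1)/2} \prod_{1 \leq i < j \leq n}(\alpha_i - \alpha_j)^2.
\]
Plugging this into \eqref{definition:discriminant}, the two factors of $(-1)^{n(n-1)/2}$ cancel and I recover the second formula.

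Neither step poses any real obstacle; both are formal consequences of the two expressions for the resultant combined with the product-rule evaluation of $f'(\alpha_j)$. The one point demanding care is correctly tracking the sign $(-1)^{n(n-1)/2}$ arising from the pairing of root-differences, so that it cancels against the sign appearing in \eqref{definition:discriminant}.
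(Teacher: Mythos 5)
Your proposal is correct and follows essentially the same route as the paper: the first equality is read off from \eqref{eq:result} with $g = f'$, and the second comes from factoring $f$ into linear factors and evaluating $f'(\alpha_j)$ by the product rule (the computation the paper delegates to \cite[Proposition IV.8.5]{Lang}). The sign bookkeeping with $(-1)^{n(n-1)/2}$ is handled correctly.
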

\begin{proof}
	The first equality follows from \eqref{eq:result}.
The second is a straightforward calculation factoring $f$ as a product of
linear polynomials over $\overline{K(R)}$, as is explained in \cite[Proposition IV.8.5]{Lang}.
Note that the last expression in the statement of the lemma is used as the definition of the discriminant in \cite[Proposition IV.8.5]{Lang}.
\end{proof}
For convenience, we also note two easy consequences of \eqref{eq:resultant} and \eqref{eq:result}.  
\begin{corollary}
	\label{corollary:switch-resultant}
	With notation as in \autoref{definition:resultant},
	$\mathrm{Res}(f,g) = (-1)^{mn} \mathrm{Res}(g,f)$. In particular $\mathrm{Res}(f,g) = \mathrm{Res}(g,f)$ if either $f$ or $g$ has even degree, as holds in the case $g = f^\prime$. 
\end{corollary}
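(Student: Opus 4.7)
The plan is a direct computation from the product formula \eqref{eq:resultant}. Writing
\[
\mathrm{Res}(f,g) = c_0^m b_0^n \prod_{\substack{\alpha, \beta \\ f(\alpha) = 0 \\ g(\beta) = 0}} (\alpha - \beta),
\]
I would swap the roles of $f$ and $g$ to obtain
\[
\mathrm{Res}(g,f) = b_0^n c_0^m \prod_{\substack{\alpha, \beta \\ f(\alpha) = 0 \\ g(\beta) = 0}} (\beta - \alpha).
\]
The leading-coefficient prefactor $c_0^m b_0^n$ is unchanged by the swap, so the only thing to track is the sign. Since $f$ has $n$ roots and $g$ has $m$ roots (counted with multiplicity), the double product contains exactly $nm$ factors, and each factor $(\alpha - \beta)$ gets replaced by $(\beta - \alpha) = -(\alpha - \beta)$. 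Collecting the $nm$ signs yields
\[
\mathrm{Res}(g,f) = (-1)^{nm}\, \mathrm{Res}(f,g),
\]
which is the first claim.

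For the second claim, observe that if either $n$ or $m$ is even then $nm$ is even, so $(-1)^{nm} = 1$ and $\mathrm{Res}(f,g) = \mathrm{Res}(g,f)$. In the specific case $g = f'$, the degree $m = n-1$, so $nm = n(n-1)$ is the product of two consecutive integers and is automatically even, yielding $\mathrm{Res}(f,f') = \mathrm{Res}(f',f)$ with no hypothesis on $n$.

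There is no real obstacle here: the statement is essentially an unpacking of the definition, and the only subtlety is being careful that the total number of factors in the double product is $nm$ (not $n+m$) so that the swap contributes $(-1)^{nm}$ rather than some other sign.
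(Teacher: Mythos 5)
Your proof is correct and is exactly the computation the paper has in mind: the corollary is stated as an immediate consequence of the product formula \eqref{eq:resultant}, and your sign-counting (the double product has $nm$ factors, each picking up a factor of $-1$ under the swap) is the standard and intended argument. The observation that $n(n-1)$ is automatically even correctly handles the case $g = f'$.
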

\begin{corollary}
	\label{corollary:sum-resultant}
	For $R$ an integral domain, and $f,g,h \in R[x]$, we have $\mathrm{Res}(f + hg, g) = \mathrm{Res}(f,g)$ and $\Res(f, gh) = \Res(f, g) \Res(f, h)$.
\end{corollary}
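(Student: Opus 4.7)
The plan is to derive both identities directly from the evaluation-at-roots formula \eqref{eq:result}, namely $\Res(f,g) = (-1)^{nm} b_0^n \prod_{g(\beta)=0} f(\beta)$, where the product is over roots of $g$ in $\overline{K(R)}$ counted with multiplicity. Each identity should reduce to a simple observation about how this product and the leading coefficients transform.

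For the first identity $\Res(f + hg, g) = \Res(f,g)$, I would apply \eqref{eq:result} to the pair $(f+hg, g)$. For any root $\beta$ of $g$ in $\overline{K(R)}$, the identity $(f+hg)(\beta) = f(\beta) + h(\beta) g(\beta) = f(\beta)$ gives $\prod_\beta (f+hg)(\beta) = \prod_\beta f(\beta)$. Provided $\deg(f+hg) = \deg f$ (so that the exponent of $b_0$ and the sign $(-1)^{nm}$ in \eqref{eq:result} are unchanged), the two applications of the formula match term-by-term.

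For the second identity $\Res(f, gh) = \Res(f,g)\Res(f,h)$, I would use that the multiset of roots of $gh$ in $\overline{K(R)}$ is the union of the roots of $g$ and of $h$, and that the leading coefficient of $gh$ factors as $b_0 e_0$, where $b_0, e_0$ are the leading coefficients of $g, h$. Substituting these into \eqref{eq:result} for $\Res(f, gh)$ and splitting the product immediately yields $\Res(f,g)\Res(f,h)$ after collecting signs $(-1)^{n(m+k)} = (-1)^{nm}(-1)^{nk}$ and powers $(b_0 e_0)^n = b_0^n e_0^n$.

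The only subtlety, which I expect to be the main bookkeeping point rather than a genuine obstacle, is the dependence of \eqref{eq:result} on the declared degrees of the polynomials, which matters for the first identity when leading coefficients accidentally cancel in $f + hg$. In the applications later in the paper, the relevant polynomials will be monic or will satisfy $\deg(hg) \le \deg f$, so this edge case does not arise and I would simply carry out the substitution under the standing convention $\deg(f+hg) = \deg f$.
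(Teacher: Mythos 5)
Your derivation is correct and is exactly the argument the paper intends: the corollary is stated without proof as an ``easy consequence of \eqref{eq:resultant} and \eqref{eq:result},'' and both identities do follow from the evaluation-at-roots formula in the way you describe. You are also right to flag the degree caveat in the first identity (the formula $\Res(f+hg,g)=\Res(f,g)$ requires $\deg(f+hg)=\deg f$, or else a correction factor $b_0^{\deg f - \deg(f+hg)}$ appears), and this condition is indeed satisfied in the paper's application in Lemma \ref{lemma:odd-disc-square}.
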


\begin{remark}
	\label{remark:}
	From \eqref{definition:discriminant} we see that 
the discriminant of $f$ is invariant under the Galois group permuting the roots
of $f$, and hence can be expressed as a weighted
homogeneous polynomial in the coefficients of $f$.
Further, if $f$ is monic with $\mathrm{ht}(f) \ll Y$, then
\autoref{lem:disc-res} implies that $\| \mathrm{Disc}(f) \| \ll Y^{n(n-1)}$.  
\end{remark}

\subsection{Preliminaries on number field counting}

We now import some of the machinery initially developed by Ellenberg and Venkatesh 
\cite{EllenbergVenkatesh} to bound $N_{n,K}(S_n;X)$ from below that was further studied by the second and third authors \cite{LemkeOliverThorne}.

Our strategy for proving \autoref{thm:an-lower-bound}, following the proof of the lower bound in \cite[Theorem 1.1]{EllenbergVenkatesh} given in 
	\cite[\S3]{EllenbergVenkatesh}, is to reduce the problem
to proving a lower bound for the number of algebraic integers with small norm that generate $A_n$-extensions of $K$.
To formulate this reduction, for a transitive subgroup $G \subseteq S_n$, we write
\begin{equation}\label{def:gk}
\GnK{G}{Y} := \{ z \in \calO_{\overline{K}} \ : \ \|z\| \leq Y, \ [K(z) : K] = n, \ \Gal(\widetilde{K(z)}/K) \simeq G  \}. 
\end{equation}
We begin by quoting a bound on the multiplicity with which a given extension $L/K$ is cut out by elements of $\GnK{G}{Y}$.

\begin{lemma}\label{lem:multiplicity}
Let $L/K$ be a degree $n$ extension and let $G = \mathrm{Gal}(\widetilde{L}/K)$.  Let 
\[
M_{L/K}(Y) := \#\{ z \in \GnK{G}{Y} : K(z) \simeq L\}.
\]
Then $M_{L/K}(Y) \ll \max\{ Y^{nd} |\mathrm{Disc}(L)|^{-1/2}, Y^{nd/2}\}$ where $\mathrm{Disc}(L)$ is the absolute discriminant of $L$.
\end{lemma}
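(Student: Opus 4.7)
The plan is to bound $M_{L/K}(Y)$ by a lattice-point count in the Minkowski embedding. Since the set of $K$-embeddings of $L$ into $\overline K$ is in bijection with the coset space $\Gal(\widetilde L/K)/\Gal(\widetilde L/L)$, which has size $n$, at most $n$ subfields of $\overline K$ are isomorphic to $L$ as $K$-algebras. Hence, up to a factor of $n$ absorbed into the implied constant, it suffices to bound
\[
N(L, Y) := \#\{\alpha \in \calO_L : \|\alpha\| \leq Y\}
\]
for one fixed embedding $L \hookrightarrow \overline K$.

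I would next realize $\calO_L$ as a lattice in the Minkowski space $L_{\mathbb R} := L \otimes_{\Q} \mathbb R$, a real vector space of dimension $nd$. With the standard normalizations, the lattice has covolume $|\mathrm{Disc}(L)|^{1/2}$, and the constraint $\|\alpha\| \leq Y$ confines $\alpha$ to a symmetric convex body $B_Y \subset L_{\mathbb R}$ of volume $\asymp Y^{nd}$, since each of the $r_1$ real and $r_2$ complex factors contributes a box of side $\asymp Y$.

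The heart of the argument is Minkowski's second theorem. Let $\lambda_1 \leq \cdots \leq \lambda_{nd}$ denote the successive minima of $\calO_L$ with respect to $B_1$, so that $\lambda_1 \cdots \lambda_{nd} \asymp |\mathrm{Disc}(L)|^{1/2}$. A standard parallelepiped bound then produces $N(L, Y) \ll \prod_{i} \max(1, Y/\lambda_i)$. A crucial input is $\lambda_1 \geq 1$: for any nonzero $\alpha \in \calO_L$ one has $1 \leq |N_{L/\Q}(\alpha)| \leq \|\alpha\|^{nd}$, so the shortest lattice vector in the $\|\cdot\|$-norm has length at least $1$. In the regime where all $\lambda_i \leq Y$, the product telescopes and yields the Minkowski-type bound $N(L,Y) \ll Y^{nd}/|\mathrm{Disc}(L)|^{1/2}$, accounting for the first branch.

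The complementary regime, where some $\lambda_i$ exceed $Y$, is the main technical obstacle: the Minkowski bound no longer suffices, and I expect the $Y^{nd/2}$ bound to come from a careful refinement, using that $\lambda_1 \gg 1$ together with either an inductive analysis on intermediate subfields between $K$ and $L$ or the structure of $\calO_L$ as a Galois-stable lattice, to prevent pathological clustering of lattice points in a high-codimension subspace. Once both regimes are handled, taking the maximum of the two resulting estimates yields the claimed bound.
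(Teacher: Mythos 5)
The paper does not prove this lemma internally: it is quoted directly from \cite[Proposition 7.5]{LemkeOliverThorne} applied to $L/\Q$, so you are attempting to reprove an imported result. Your setup is the correct one and matches the geometry-of-numbers argument underlying that proposition: the reduction to one of the at most $n$ conjugates of $L$ in $\overline{K}$, the realization of $\O_L$ as a lattice of covolume $\asymp|\mathrm{Disc}(L)|^{1/2}$ in the $m$-dimensional Minkowski space (where $m:=nd$), the bound $\ll\prod_i(1+Y/\lambda_i)$ via successive minima, the lower bound $\lambda_1\geq 1$ from $|N_{L/\Q}(\alpha)|\geq 1$, and the resulting branch $Y^{m}|\mathrm{Disc}(L)|^{-1/2}$ when all $\lambda_i\leq Y$ are all fine.

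However, the case you defer is where the entire content of the lemma lives, and the mechanisms you gesture at are not the ones that close it. The missing idea is the \emph{ring} structure of $\O_L$: since $\|\alpha\beta\|\leq\|\alpha\|\,\|\beta\|$, and since for $a+b-1=m$ the spans $V_a,V_b$ of the vectors realizing the first $a$ (resp.\ $b$) minima both contain $1$ and hence cannot both lie in a proper subfield (whose degree is at most $m/2$), the theorem on dimensions of products of subspaces of a field forces $V_aV_b=L$; as $V_aV_b$ is spanned by lattice vectors of norm at most $\lambda_a\lambda_b$, this gives the key inequality $\lambda_a\lambda_{m+1-a}\geq\lambda_m$. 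With it, when $\lambda_m>Y$ each paired factor $(1+Y/\lambda_a)(1+Y/\lambda_{m+1-a})$ is $\ll Y$ (the cross term is $\leq Y^2/\lambda_m<Y$ and the single terms are $\leq Y/\lambda_1\leq Y$), yielding $Y^{m/2}$; when $\lambda_m\leq Y$ one falls back on the Minkowski branch. This input is indispensable: for a general lattice with $\lambda_1=\cdots=\lambda_j=1$ and $\lambda_{j+1}=\cdots=\lambda_m=D$ with $m/2<j<m$ and $1\ll Y\ll D$, the box contains $\asymp Y^{j}$ points, which exceeds both $Y^{m/2}$ and $Y^{m}\bigl(\prod_i\lambda_i\bigr)^{-1}$, so no amount of refinement of the parallelepiped count alone can succeed — one must use that such a spectrum of minima cannot occur for an order in a field. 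Neither induction over intermediate subfields nor Galois-stability of $\O_L$ (which fails unless $L/\Q$ is Galois) supplies this. As written, the proposal is therefore incomplete at the decisive step; to make it self-contained you would need the successive-minima inequality above, or you should simply cite \cite[Proposition 7.5]{LemkeOliverThorne} as the paper does.
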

\begin{proof}
This is essentially \cite[Proposition 7.5]{LemkeOliverThorne}, and we recall the proof.

We count $z \in \calO_{L}$ with $\|z\| \leq Y$, with no reference to $K$. To do so, embed $\mathcal{O}_L \hookrightarrow \mathbb{R}^{nd}$ as usual and
write $\lambda_0, \dots, \lambda_{nd - 1}$ for the successive minima (with $\lambda_0 \leq 1$). If $\lambda_{nd - 1} \leq Y$, then an integral basis for $\mathcal{O}_L$ fits inside a box
of side length $O(Y)$, so that $M_{L/K}(Y) \ll Y^{nd} |\mathrm{Disc}(L)|^{-1/2}$. Otherwise, let $k < nd - 1$ be the largest integer for which $\lambda_k \leq Y$, and we have
\begin{equation}\label{eq:lo75}
M_{L/K}(Y) \ll \frac{Y^{k + 1}}{\lambda_1 \dots \lambda_k} \ll \frac{Y^{k + 1}}{\Disc(L)^{1/2}} \lambda_{k + 1} \cdots \lambda_{nd - 1}.
\end{equation}
If $k \leq \frac{nd}{2} - 1$ then the first half of \eqref{eq:lo75} yields $M_{L/K}(Y) \ll Y^{nd/2}$; if $k > \frac{nd}{2} - 1$ then we use the second inequality 
in \eqref{eq:lo75} in combination with the bounds $Y < \lambda_{nd -1}$ and  $\lambda_{nd - 1} \ll \Disc(L)^{1/nd}$ \cite[Theorem 3.1]{BSTTTZ} to again conclude that
\[
M_{L/K}(Y) \ll 
\Disc(L)^{1/2} \bigg( \frac{Y}{\Disc(L)^{1/nd}} \bigg)^{k + 1} \ll Y^{nd/2}.
\]
\end{proof}

With \autoref{lem:multiplicity} in hand, we are able to make explicit the reduction from counting integers generating $G$-extensions to counting $G$-extensions themselves.

\begin{proposition}\label{prop:reduction} 
	Let $K$ be a number field of degree $d$, and let $G$ be a transitive subgroup of $S_n$ for some $n \geq 3$. 
	Let $e\geq 1/(n-1)$ and $C>n$ be constants such that:
	\begin{itemize}
		\item $\NnK{G}{X} \ll X^e$, and
		\item $\#\GnK{G}{Y} \gg Y^{dC}$
	\end{itemize}
	hold for all sufficiently large $X$ and $Y$.  Then
	\begin{equation}\label{eq:reduction}
		\NnK{G}{X} 
			\gg X^{\frac{C-n/2}{n^2-n}}
	\end{equation}
    if $C \geq n\left(e + \frac12\right)$, and
    \[
		\NnK{G}{X}
			\gg X^{\frac{2e(C-n)}{(2e-1)(n^2-n)}}
    \]
    if not.
\end{proposition}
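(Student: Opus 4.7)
The plan is to combine the hypothesized lower bound on $\#\GnK{G}{Y}$ with the multiplicity bound of \autoref{lem:multiplicity} to control $N := \NnK{G}{cX}$ from below, where $c$ is a positive constant depending only on $K$ and $n$. Set $Y := X^{1/(dn(n-1))}$. Because $|\calN_{K/\Q}(\Disc f)| \leq \|\Disc f\|^d$ for every $f \in \calO_K[x]$, \autoref{remark:} shows that each $\alpha \in \GnK{G}{Y}$ generates a field $L \in \FnK{G}{cX}$ for a suitable $c$. Combining the two hypotheses therefore gives
\begin{equation*}
Y^{dC} \ll \#\GnK{G}{Y} \leq \sum_{L \in \FnK{G}{cX}} M_{L/K}(Y) \ll N\, Y^{nd/2} + Y^{nd}\, \Sigma,
\end{equation*}
where $\Sigma := \sum_{L \in \FnK{G}{cX},\, |\Disc(L)| \leq Y^{nd}} |\Disc(L)|^{-1/2}$; the split simply records which term realizes the maximum in \autoref{lem:multiplicity} according as $|\Disc(L)|$ is bigger or smaller than $Y^{nd}$.

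I would next estimate $\Sigma$ by Abel summation against the bound $\NnK{G}{t} \leq \min(t^e, N)$, which is valid for all $t \leq cX$. When $e \leq \tfrac12$, the relevant integral $\int_1^{Y^{nd}} t^{e-3/2}\,dt$ converges (or at worst produces a logarithm at $e = \tfrac12$), so $Y^{nd}\Sigma \ll Y^{nd+\epsilon}$ for any $\epsilon > 0$. When $e > \tfrac12$, I would split the Abel integral at the threshold $t = N^{1/e}$, using $\NnK{G}{t} \leq t^e$ below this point and $\NnK{G}{t} \leq N$ above it; a short calculation then yields $\Sigma \ll N^{1 - 1/(2e)}$.

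Plugging these estimates back in produces the required case analysis. When $e \leq \tfrac12$, the main inequality becomes $Y^{dC} \ll N\, Y^{nd/2} + Y^{nd+\epsilon}$; since $C > n$, the linear-in-$N$ term must dominate for large $X$, yielding $N \gg Y^{d(C - n/2)} = X^{(C - n/2)/(n^2 - n)}$. When $e > \tfrac12$ and $C \geq n(e + \tfrac12)$, one has $Y^{nd} N^{1 - 1/(2e)} \leq Y^{nd(e + 1/2)} \leq Y^{dC}$, so the same conclusion holds. Finally, when $e > \tfrac12$ and $C < n(e + \tfrac12)$, the nonlinear term $Y^{nd} N^{1 - 1/(2e)}$ must dominate $Y^{dC}$; solving the resulting inequality gives $N \gg Y^{d(C - n) \cdot 2e/(2e - 1)} = X^{2e(C - n)/((2e - 1)(n^2 - n))}$, which matches the second displayed formula in the statement.

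The main obstacle is the refined Abel estimate $\Sigma \ll N^{1 - 1/(2e)}$ when $e > \tfrac12$: the cruder bound $\Sigma \ll Y^{nd(e - 1/2)}$ coming from $\NnK{G}{t} \ll t^e$ alone is too weak to produce anything nontrivial in the regime $C < n(e + \tfrac12)$, so it is essential to exploit the trivial upper bound $\NnK{G}{t} \leq N$ at the top of the range. This is precisely what allows the nonlinear inequality $Y^{dC} \ll Y^{nd} N^{1 - 1/(2e)}$ to be solved for $N$ and to yield the second bound in the proposition.
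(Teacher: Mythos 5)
Your argument is correct and rests on the same two ingredients as the paper's proof --- \autoref{lem:multiplicity} together with partial summation against the hypothesis $\NnK{G}{t}\ll t^e$ --- but it is organized differently. The paper fixes a case-dependent cutoff $Z$, shows $\sum_{L\in\FnK{G}{Z}}M_{L/K}(Y)\leq\#\GnK{G}{Y}/2$, and then divides $\#\GnK{G}{Y}$ by the maximal multiplicity of fields with discriminant exceeding $Z$; you instead bound the \emph{entire} sum by $N\,Y^{nd/2}+Y^{nd}\Sigma$ and exploit the trivial bound $\NnK{G}{t}\leq N$ in the Abel summation to get the self-referential estimate $\Sigma\ll N^{1-1/(2e)}$, after which you solve the inequality $Y^{dC}\ll N\,Y^{nd/2}+Y^{nd}N^{1-1/(2e)}$ for $N$. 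The two devices are interchangeable: your threshold $t=N^{1/e}$ plays exactly the role of the paper's $Z$, and the dividing line $C=n(e+\tfrac12)$ emerges from the same exponent comparison in both treatments. Your version has the mild advantage of producing both bounds from a single inequality via the dichotomy ``one of the two terms on the right is $\gg Y^{dC}$''; the paper's version avoids any circularity by never bounding the tail in terms of the unknown $N$.

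One sentence needs repair: in the subcase $e>\tfrac12$, $C\geq n(e+\tfrac12)$ you assert $Y^{nd}N^{1-1/(2e)}\leq Y^{nd(e+1/2)}$, which presupposes $N\leq Y^{nde}$ --- something you do not know (the a priori bound is only $N\ll X^e=Y^{dn(n-1)e}$). The conclusion survives via the dichotomy you already use in the third case: if instead $Y^{nd}N^{1-1/(2e)}\gg Y^{dC}$, then $N\gg Y^{2ed(C-n)/(2e-1)}$, and the condition $C\geq n(e+\tfrac12)$ is precisely what makes $\tfrac{2e(C-n)}{2e-1}\geq C-\tfrac n2$, so the first bound holds in either branch. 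With that one-line fix the proof is complete.
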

\begin{proof}
	Let $Y = X^{1/dn(n-1)}$. We will proceed with the proof in three cases, depending on whether $C \geq n \left( e+\frac{1}{2} \right)$ and whether $e \geq 1/2$.
	In each of these cases, we will show the existence of some $Z>1$ such that
	\begin{equation}\label{eqn:reduction-goal}
		\sum_{L \in \FnK{G}{Z}} M_{L/K}(Y)
			\leq \#\GnK{G}{Y} / 2.
	\end{equation}
	Since $|\mathrm{Disc}(K(\alpha))| \ll Y^{dn(n-1)}$ for any $\alpha \in \GnK{G}{Y}$, we have $\NnK{G}{X} \gg \#\GnK{G}{Y} / M$, where $M$ is the maximum of $M_{L/K}(Y)$ over $G$-extensions $L/K$ with norm of discriminant greater than $Z$.  This maximum $M$ may be estimated by means of \autoref{lem:multiplicity}, while \eqref{eqn:reduction-goal} may be established by combining \autoref{lem:multiplicity} with the assumed upper bound on $\NnK{G}{X}$.  The different cases in the statement of the proposition follow by making suitable choices of $Z$, as we now explain.
		
Suppose first that $e > 1/2$.  Then for any $Z \leq Y^{nd}$, by \autoref{lem:multiplicity} and partial summation, 
		we have
	\[
		\sum_{L \in \FnK{G}{Z}} M_{L/K}(Y)
			\ll Y^{nd} Z^{e-\frac{1}{2}}.
	\]
	If $C \leq n\left(e + \frac12\right)$, then we choose $Z$ to be a sufficiently small multiple of $Y^{\frac{2d(C-n)}{(2e-1)}}$ for which \eqref{eqn:reduction-goal} holds. 
	If $C \geq n\left(e + \frac12\right)$ we choose $Z$ to be a sufficiently small multiple of $Y^{nd}$.
	
	If instead $e \leq 1/2$, then we again
	have $C \geq n\left(e + \frac12\right)$ and
	 take $Z$ to be a sufficiently small multiple of $Y^{nd}$. In this case our hypotheses imply that
	\[
		\sum_{L \in \FnK{G}{Y^{nd}}} M_{L/K}(Y)
			\ll Y^{nd} \log Y,
	\]
	the $\log Y$ factor being relevant only for $e = \frac12$.
\end{proof}

Using the Schmidt bound of \eqref{eq:schmidt}
in \autoref{prop:reduction}, i.e. taking $e = \frac{n+2}{4}$, we obtain the following immediate consequence.

\begin{corollary}\label{cor:reduction-schmidt}
With the assumptions of \autoref{prop:reduction},
\[
	\NnK{G}{X} \gg 
		\begin{cases}
			X^{ \frac{C-n/2}{n^2-n} } & \text{if } C \geq \frac{n^2+4n}{4}, \text{ and} \\
			X^{ \frac{(C-n)(n+2)}{n^3-n^2} } & \text{if } C \leq \frac{n^2+4n}{4}.
		\end{cases}
\]
\end{corollary}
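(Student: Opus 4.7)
The plan is to simply apply \autoref{prop:reduction} with $e = \frac{n+2}{4}$, which is justified by the Schmidt bound \eqref{eq:schmidt}, and then simplify the resulting exponents. The only work is bookkeeping: identifying which of the two cases of \autoref{prop:reduction} we land in, and performing the algebraic simplification.

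First I would check the case dichotomy. Since $n \geq 3$, we have $e = \frac{n+2}{4} > \frac{1}{2}$, so the condition ``$e \leq 1/2$'' in \autoref{prop:reduction} is never satisfied, and only the dichotomy in terms of $C$ versus $n\!\left(e + \tfrac12\right)$ matters. Substituting $e = \frac{n+2}{4}$, the threshold becomes
\[
n\!\left(e + \tfrac12\right) = n \cdot \frac{n+2+2}{4} = \frac{n^2+4n}{4},
\]
which is exactly the dividing line in the statement.

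Next I would handle the two regimes separately. When $C \geq \frac{n^2+4n}{4}$, \autoref{prop:reduction} immediately gives $\NnK{G}{X} \gg X^{(C - n/2)/(n^2 - n)}$, so there is nothing to simplify. When $C \leq \frac{n^2+4n}{4}$, \autoref{prop:reduction} gives $\NnK{G}{X} \gg X^{2e(C-n)/((2e-1)(n^2 - n))}$, and I just need to simplify the prefactor: with $e = \frac{n+2}{4}$, we have $2e = \frac{n+2}{2}$ and $2e-1 = \frac{n}{2}$, so
\[
\frac{2e}{2e-1} = \frac{n+2}{n},
\]
and hence
\[
\frac{2e(C-n)}{(2e-1)(n^2-n)} = \frac{(n+2)(C-n)}{n(n^2-n)} = \frac{(C-n)(n+2)}{n^3 - n^2},
\]
matching the stated exponent.

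There is no real obstacle here: the corollary is a direct specialization of \autoref{prop:reduction}, and the only thing to watch is that the case split on $e \le 1/2$ is vacuous for $n \ge 3$, so the dichotomy collapses to a single inequality between $C$ and $\frac{n^2+4n}{4}$.
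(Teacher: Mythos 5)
Your proposal is correct and is exactly the paper's argument: the corollary is stated there as an immediate consequence of \autoref{prop:reduction} with $e = \frac{n+2}{4}$ from the Schmidt bound, and your bookkeeping (the threshold $n(e+\tfrac12) = \frac{n^2+4n}{4}$ and the simplification $\frac{2e}{2e-1} = \frac{n+2}{n}$) is accurate.
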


\section{Proof of \autoref{thm:an-lower-bound}}
\label{section:proof}

\subsection{Overview of proof}
In this section, we prove \autoref{thm:an-lower-bound}. The proof in the even case is given in \autoref{subsection:even-proof} and completed in \autoref{subsubsection:even-proof},
while the proof in the odd case is given in \autoref{subsection:odd-proof} and completed in \autoref{subsubsection:odd-proof}.
As described earlier, our strategy is to adapt the original constructions of $A_n$-polynomials 
by Hilbert \cite[pp. 126-127]{Hilbert}, and count the number of distinct fields thus produced. For $K$ a number field, we construct a polynomial $F \in K(a_1,\dots,a_r,t)[x]$ whose Galois group over the function field
	$K(a_1, \ldots, a_r,t)$	
	is $A_n$.
By specializing the variables $a_1, \dots, a_r$ and $t$ suitably, we will obtain many irreducible polynomials 
whose Galois groups are still $A_n$, and then we use \autoref{prop:reduction} to conclude the proof.

The constructions differ depending on whether $n$ is even or odd.  One may consult 
\cite[\S10.3]{Serre} for an English-language treatment of Hilbert's work in the case that $n$ is even,
and \cite[\S5.2]{sellares:realizing-sn-and-an-as-galois-groups}
for a treatment of both the even and odd cases. 

\subsection{Notation for proof}
We begin by introducing some notation which will be used in both cases. 
Set $r = \frac{n}{2} - 1$ when $n$ is even and set $r = \frac{n - 1}{2}$ if $n$ is odd.
We then introduce a polynomial
\begin{equation}\label{eq:h-definition}
h(x) = x^r + a_1 x^{r - 1} + \cdots + a_r \in \O_K[a_1, \cdots, a_r][x]
\end{equation}
in $x$ and in the indeterminates $a_i$, and define $g(x) \in \O_K[a_1, \cdots, a_r, a][x]$ by
\begin{equation}
	g(x) := \begin{cases} n(x - a) h(x)^2 & \text{ if $n$ is even,}\\ (n - 1)(x - a) h(x)^2 & \text{ if $n$ is odd.} \end{cases}
	\label{eq:g-definition}
\end{equation}
For $\alpha_1, \cdots, \alpha_r, \alpha, \tau \in K$ we denote
by $|_{\alpha_1, \ldots, \alpha_r, \alpha, \tau}$
	the evaluation map
	\begin{align*}
		|_{\alpha_1, \ldots, \alpha_r, \alpha, \tau} \colon K[a_1, \ldots, a_r,a,t,x] & \rightarrow K[x] \\
		f(a_1, \ldots, a_r, a, t, x) & \mapsto f(\alpha_1, \ldots, \alpha_r, \alpha, \tau, x) =: f|_{\alpha_1, \ldots, \alpha_r, \alpha, \tau}.
	\end{align*}
	We also use analogous notation when the domain has fewer indeterminates; for
	example, $|_{\tau}$ denotes the map $K[t, x] \rightarrow K[x]$ given by $f(t,x) \mapsto f(\tau,x)$. Observe also that when 
	$\alpha_1, \cdots, \alpha_r, \alpha, \tau \in \O_K$, these maps restrict to homomorphisms from the appropriate polynomial rings over $\O_K$ to $\O_K[x]$.

\subsection{Proof of \autoref{thm:an-lower-bound} and
\autoref{thm:sn-lower-bound} for even $n$}
\label{subsection:even-proof}

Assume that $n \geq 6$ is even. 
Based on Hilbert's construction \cite[p. 125-126]{Hilbert}, we consider polynomials whose derivative is nearly a square.
Recall our notation for $g(x)$ as defined in \eqref{eq:g-definition}.
Let $\tilde{f}(x) \in K(a_1, \ldots, a_r,a)[x]$ denote the antiderivative of $g(x)$ with respect to $x$ such that $(x-a)^2$ divides $\tilde{f}(x)$. 

Then, for each $\gamma \in \frac{1}{n!} \O_K[a_1, \ldots, a_r, a][t]$, 
define
\[
\widetilde{f_\gamma}(x) := \widetilde{f}(x) + \gamma, \ \ \ 
f_\gamma(x) :=  (n!)^n\tilde{f_\gamma}(x/n!).
\]
Note for $\gamma \in \frac{1}{n!} \O_K[a_1, \ldots, a_r,a][t]$, we will have
$f_\gamma(x) \in \O_K [a_1, \ldots, a_r, a,t][x]$ is monic with integral coefficients.
\begin{lemma}
	\label{lemma:even-disc-square}
	With notation as above, the discriminant of $f_\gamma(x)$, viewed as a polynomial in $x$, is a square if and only if $(-1)^{n/2}\gamma$ is a square. 
\end{lemma}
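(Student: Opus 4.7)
The plan is to apply the critical-value formula from \autoref{lem:disc-res},
\[
\Disc(f_\gamma) = (-1)^{n(n-1)/2}\, n^n \prod_{\beta : f_\gamma'(\beta)=0} f_\gamma(\beta),
\]
where the product runs over roots of $f_\gamma'$ counted with multiplicity. The strategy is to compute $f_\gamma'$ explicitly, observe that all of its roots but one are double, and then use the defining vanishing $\widetilde{f}(a)=0$ to reduce the remaining critical value to $\gamma$.

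First, since $\widetilde{f}'(x) = g(x) = n(x-a)h(x)^2$ and $f_\gamma(x) = (n!)^n \widetilde{f}_\gamma(x/n!)$, the chain rule gives $f_\gamma'(x) = (n!)^{n-1}\, g(x/n!)$. Setting $A := a \cdot n!$ and $H(x) := (n!)^r h(x/n!)$, which is monic of degree $r = n/2 - 1$ in $x$, the identity $2r = n-2$ yields the clean factorization
\[
f_\gamma'(x) = n\,(x - A)\, H(x)^2.
\]
Thus the critical points of $f_\gamma$ are $A$ (simple) together with the $r$ roots $\beta_1,\dots,\beta_r$ of $H$, each double, so the product in the discriminant formula equals $f_\gamma(A) \cdot \bigl(\prod_{i=1}^r f_\gamma(\beta_i)\bigr)^2$.

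Next, the defining property that $(x-a)^2$ divides $\widetilde{f}(x)$ forces $\widetilde{f}(a)=0$, whence
\[
f_\gamma(A) \;=\; (n!)^n \widetilde{f}_\gamma(a) \;=\; (n!)^n\bigl(\widetilde{f}(a) + \gamma\bigr) \;=\; (n!)^n \gamma.
\]
Combining these identities gives
\[
\Disc(f_\gamma) \;=\; (-1)^{n(n-1)/2}\, n^n (n!)^n\, \gamma \cdot \Bigl(\prod_{i=1}^r f_\gamma(\beta_i)\Bigr)^2.
\]

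Finally, for even $n$ both $n^n$ and $(n!)^n$ are manifest squares, and writing $n = 2k$ one checks that $n(n-1)/2 = k(2k-1) \equiv k \pmod 2$, so $(-1)^{n(n-1)/2} = (-1)^{n/2}$. The remaining factor $\prod_i f_\gamma(\beta_i)$ is a symmetric function of the roots of $H$, hence lies in the ring generated by the coefficients of $H$ and $\gamma$, and its square is therefore a genuine square in the ambient polynomial ring. The equivalence that $\Disc(f_\gamma)$ is a square iff $(-1)^{n/2}\gamma$ is a square then follows at once. The only real hazard is bookkeeping: keeping the factors of $n!$ straight so that $f_\gamma'$ factors as cleanly as claimed, and correctly tracking the sign $(-1)^{n(n-1)/2}$ as a function of $n \pmod 4$.
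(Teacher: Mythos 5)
Your proof is correct and follows essentially the same route as the paper: both apply the critical-value formula of \autoref{lem:disc-res}, use that the critical points other than $a$ are double roots of the derivative, and reduce the lone simple critical value to $\gamma$ via $\widetilde{f}(a)=0$. The only cosmetic difference is that the paper first passes to $\widetilde{f}_\gamma$ (noting its discriminant differs from that of $f_\gamma$ by a square since $n$ is even) while you work with $f_\gamma$ directly and absorb the resulting factor $(n!)^n$, which is a square; your remark that $\prod_i f_\gamma(\beta_i)$ is symmetric in the roots of $H$ and hence lies in the base ring is a worthwhile point the paper leaves implicit.
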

\begin{proof}
	Using \autoref{lem:disc-res}, since $n$ is even, it is equivalent to compute whether the discriminant of $\widetilde{f}_\gamma(x)$ is a square.
	Using \autoref{lem:disc-res} again, we find
\begin{align*}
	\mathrm{Disc}(\widetilde{f}_\gamma(x))
	&= (-1)^{n(n-1)/2}n^n \prod_{\beta : \widetilde{f}^\prime(\beta)=0} \widetilde{f}_\gamma(\beta) \\
	&= (-1)^{n/2}n^n \widetilde{f}_\gamma(a) \left(\prod_{\beta : h(\beta) =0} \widetilde{f}_\gamma(\beta)\right)^2 \\
	&= (-1)^{n/2}n^n \gamma \cdot \left(\prod_{\beta : h(\beta) =0} \widetilde{f}_\gamma(\beta)\right)^2.
\end{align*}
The final expression is a square if and only if $(-1)^{n/2} \gamma$ is a square.
\end{proof}

We now essentially recall the construction of Hilbert on which ours is based; Hilbert in fact further takes $\delta = 1$ in the lemma below. 
In \eqref{eq:g-definition}, further specialize to the case 
\[
a = 0, \ \ \ h(x) := (x-\beta_1) \cdots (x-\beta_r),
\]
where $\beta_1, \dots, \beta_r$ are nonzero and distinct elements of $\O_K$, for which 
$\widetilde{f}(\beta_1), \dots, \widetilde{f}(\beta_r)$ are also nonzero and distinct. (For example, choose $\beta_i = i$ for each $i$.
Then $g(x)$ is nonnegative for $x > 0$, so that $\widetilde{f}$ is increasing there.)

We write $\widetilde{P}$, $P$, $\widetilde{P}_\gamma$, and $P_\gamma$ for the associated specializations of $\widetilde{f}$, $f$, $\widetilde{f}_\gamma$ and $f_\gamma$;
the first two are elements of $K[x]$, and the latter two of $K[x,t]$. 

\begin{lemma}\label{lemma:even-hilbert}
	With the notation above, for any $\delta \in K^\times$, the Galois group $G$ of $\widetilde{P}_{ (-1)^{n/2} \delta t^2} \in K(t)[x]$ over $K(t)$ is $A_n$ if $\delta$ is a square and $S_n$ otherwise.  If $\delta$ is not a square, then the Galois group of $\widetilde{P}_{ (-1)^{n/2} \delta t^2}$ over $K(\sqrt{\delta})(t)$ is $A_n$.
\end{lemma}
\begin{proof}
See \cite[p. 125-126]{Hilbert}, \cite[\S10.3, Theorem]{Serre}, or \cite[\S5.2]{sellares:realizing-sn-and-an-as-galois-groups}; we summarize
Mart\'inez's argument from \cite{sellares:realizing-sn-and-an-as-galois-groups}.

We begin by considering the Galois group $G^\prime$ of $\widetilde{P}_{ (-1)^{n/2} \delta t^2}$ as a polynomial over $\mathbb{C}(t)$.  The discriminant of $\widetilde{P}_{ (-1)^{n/2}\delta t^2} \in \mathbb{C}(t)[x]$ is a square  by
\autoref{lemma:even-disc-square}, 
so $G^\prime \subseteq A_n$.  We will show that $G^\prime$ is in fact all of $A_n$, for which it suffices to show that $G^\prime$ is generated by $3$-cycles and that it is transitive.  We prove these in turn.

Since there are no unramified finite extensions of $\mathbb{C}(t)$, $G^\prime$ is generated by the inertia groups at the ramified primes. 
It therefore suffices to show that these are
all $3$-cycles.

By our discriminant computation in \autoref{lemma:even-disc-square}, the ramified primes are given by $(t)$ and $\left(t\sqrt{\delta} \pm \sqrt{(-1)^{n/2 + 1} \widetilde{P}(\beta_i)} \right)$.
Modulo $(t)$, $\widetilde{P}_{ (-1)^{n/2} \delta t^2}$ has a double root at $x = 0$ and its other roots are simple.
(Note that $\widetilde{P}(\beta_i) \neq 0$ for each $\beta_i$, as the derivative of $g$ is nonnegative.) Therefore, the inertia group at $(t)$ is either trivial or generated by a transposition; since it is a subgroup of $A_n$, it must be trivial.

Modulo $\left(t\sqrt{\delta} \pm \sqrt{(-1)^{n/2 + 1} \widetilde{P}(\beta_i)} \right)$, $\widetilde{P}_{ (-1)^{n/2} \delta t^2}$ has a triple root at $x = \beta_i$ and its other roots are simple.
The corresponding inertia group is therefore either trivial or a $3$-cycle, and this completes the proof that $G^\prime$ is generated by $3$-cycles.

	To complete the proof that $G^\prime \simeq A_n$, we will verify $G^\prime$ acts transitively on the $n$ roots of $\widetilde{P}_{ (-1)^{n/2} \delta t^2}$ over an algebraic closure.
Notice that $\widetilde{P}_{ (-1)^{n/2} \delta t^2} = \widetilde{P} +(-1)^{n/2}\delta t^2$.  As $\widetilde{P}$ has a simple root, $(-1)^{n/2+1}\delta \widetilde{P}$ is not a square in $\mathbb{C}(x)$.  Thus, $\widetilde{P}_{ (-1)^{n/2} \delta t^2}$ is irreducible as a polynomial in $\mathbb{C}(x)[t]$.  We conclude that $G^\prime = A_n$.

It thus follows for any finite extension $L/K$ that the Galois group of $\widetilde{P}_{ (-1)^{n/2} \delta t^2}$ over $L(t)$ contains $A_n$, and is thus equal to either $A_n$ or $S_n$.  The remainder of the claim follows from observing that, by \autoref{lemma:even-disc-square}, these two possibilities correspond exactly to whether or not $\delta$ is a square in $L$.
\end{proof}

\begin{remark}
Our proof corrects a sign error, found not only in 
\cite{sellares:realizing-sn-and-an-as-galois-groups}
but also in 
\cite{Hilbert}. At least in Mart\'inez's case, this can be traced to 
a missing sign in 
\cite[Observation 5.2]{sellares:realizing-sn-and-an-as-galois-groups}. As Lang remarks in his {\itshape Algebra} \cite{Lang}: ``Serre once pointed out to me that the sign $(-1)^{n(n - 1)/2}$ was missing
in the first edition of this book, and that this sign error is quite common in the literature, occurring as it does in the works of van der Waerden, Samuel, and Hilbert."
\end{remark}

We now extrapolate Hilbert's result to prove an analogue over a larger base field.

\begin{lemma}
	\label{lemma:even-big-galois}
	With notation as above, for any $\delta \in K^\times$, the Galois group of $f_{(-1)^{n/2}\delta t^2}$ over $K(a_1,\dots,a_r,a,t)[x]$ is $A_n$ if $\delta$ is a square and $S_n$ otherwise.  If $\delta$ is not a square, then the Galois group of $f_{(-1)^{n/2}\delta t^2}$ over $K(\sqrt{\delta})(a_1,\dots,a_r,a,t)[x]$ is $A_n$.
	\end{lemma}
\begin{proof}
	As a first step, note that the Galois group of $f_\gamma$ agrees with that of $\widetilde{f}_\gamma$, and so we will compute the Galois group of the latter polynomial
	in the case that 
	$\gamma = (-1)^{n/2}\delta t^2$.

In the case that $\delta$ is a square, by \autoref{lemma:even-hilbert}, the polynomial $\widetilde{f}_\gamma$ specializes to a polynomial
$\widetilde{F}_\gamma \in K(t)[x]$ with Galois group $A_n$ over $K(t)$.
Hence, the Galois group of 
$\widetilde{f}_{(-1)^{n/2}\delta t^2}$ over $K(a_1,\dots,a_r,a,t)[x]$ contains $A_n$. Since the discriminant of 
this polynomial is a square
by \autoref{lemma:even-disc-square}, its Galois group must be exactly $A_n$.  The case that $\delta$ is not a square follows analogously.
\end{proof} 

\subsubsection{Completing the proof of \autoref{thm:an-lower-bound} and \autoref{thm:sn-lower-bound}
for even $n$}
\label{subsubsection:even-proof}

We begin with \autoref{thm:an-lower-bound}, for which we make the choice $\delta =1$.  Using \autoref{lemma:even-big-galois}, 
we may choose $\gamma \in K[a_1, \ldots, a_r, a][t]$ so that $f_\gamma$ has Galois group $A_n$.
We vary $\alpha_1, \ldots, \alpha_r, \alpha, \tau \in \calO_K$ subject to the constraints
\begin{align*}
	\|\alpha_i\| \ll Y^i, \|\alpha\| \ll Y, \|\tau\| \ll Y^{n/2},
\end{align*}
making a total of $\asymp_K (Y \cdot Y^2 \cdots Y^{n/2-1} \cdot Y \cdot Y^{n/2})^d = Y^{\frac{d(n^2+2n+8)}{8}}$ choices of the parameters.
By the Hilbert irreducibility theorem (\autoref{thm:hit}) we have
\begin{equation}\label{eqn:even-poly-count}
\#\{ \alpha_1, \dots, \alpha_r, \alpha, \tau \in \O_K : \mathrm{ht}(f_\gamma|_{\alpha_1, \ldots, \alpha_r, \alpha, \tau}) \ll Y, \mathrm{Gal}(f_\gamma|_{\alpha_1, \ldots, \alpha_r, \alpha, \tau} / K) \simeq A_n\}
	\gg Y^{\frac{d(n^2+2n+8)}{8}}.
\end{equation}

We now note that, for each fixed polynomial $q \in K[x]$, there are 
at most $\deg \gamma = 2$ many tuples $(\alpha_1, \ldots, \alpha_r, \alpha, \tau)$ so that $f_\gamma|_{\alpha_1, \ldots, \alpha_r, \alpha, \tau}$
coincides with $q$, or equivalently so that $\widetilde{f_\gamma}|_{\alpha_1, \ldots, \alpha_r, \alpha, \tau}$ coincides with $\widetilde{q}$, where $\widetilde{q}(x) := (n!)^{-n}q(n!x)$.
To see why, first note that the
value $\alpha$ is determined as the unique root of $\frac{\partial \widetilde q}{\partial x}$ which appears with odd multiplicity. 
Then, because $h$ is monic and we know the value of $h^2$, the values $\alpha_1, \ldots, \alpha_r$ are determined.
Having determined the values $\alpha, \alpha_1, \ldots, \alpha_r$, there are then at most $\deg \gamma$ (viewed as a polynomial in $t$) many choices of $\tau$ so that the constant coefficient of
$\widetilde{f_\gamma}|_{\alpha_1, \ldots, \alpha_r, \alpha, \tau}$, viewed as a polynomial in $x$, is equal to
the constant coefficient of $\widetilde{q}$.

Therefore, in the notation of \eqref{def:gk} we have $\GnK{A_n}{Y} \gg_K Y^{\frac{d(n^2 + 2n + 8)}{8}}$, and hence by taking $C=(n^2+2n+8)/8$ in \autoref{cor:reduction-schmidt}
we conclude that 
\[
\NnK{A_n}{X} 
	\gg X^{\frac{(n - 4)(n^2 - 4)}{8(n^3 - n^2)}}.
\]
The proof of \autoref{thm:sn-lower-bound} for even $n$ is exactly the same, except choosing $\delta$ to be any integral element for which $M = K(\sqrt{\delta})$.
\subsection{Proof of \autoref{thm:an-lower-bound} 
	and \autoref{thm:sn-lower-bound}
for odd $n$}
\label{subsection:odd-proof}

Assume that $n\geq 7$ is odd. Again we follow Hilbert \cite[p. 126 - 127]{Hilbert};
see also \cite[\S5.2]{sellares:realizing-sn-and-an-as-galois-groups} for a version in English.
In the case that $n$ was even, we considered polynomials whose derivative is nearly a square, and used this to compute the Galois group of the resulting polynomial.
In the case $n$ is odd, we instead consider polynomials for which $x \frac{\partial f}{\partial x}-f$ is nearly a square.  
Using properties of resultants, this will let us control the discriminant of $f$ in much the same way as the case that $n$ is even.

Set $r=(n-1)/2$, define $g$ and $h$ as in \eqref{eq:h-definition} and \eqref{eq:g-definition}, and define $\ol g(x), \ol h(x) \in \O_K[a_1, \ldots, a_{r-1}, a][x]$
to be the polynomials obtained from $g$ and $h$ by replacing $a_r$ with $2a_{r - 1}a$.

\begin{lemma}
	\label{lemma:height-bound}
	Given the notation above, there is a unique polynomial $\widetilde{f}(x) \in {\frac{1}{n!} \O_K}[a_1, \ldots, a_{r-1},a][x]$
	of degree $n$, necessarily monic,
	satisfying 
	\[ x \frac{\partial \widetilde{f} }{\partial x} - \widetilde{f}(x) = \ol{g}(x), \ \ \ \widetilde{f}'(0) = 0.
	\]
	For each $\gamma \in \frac{1}{n!}\O_K[a_1, \ldots, a_{r-1}, a][t]$, the polynomial
	$\widetilde{f}_\gamma(x) := \widetilde{f}(x) + \gamma \cdot x \in \frac{1}{n!} \O_K[a_1, \ldots, a_{r-1},a,t][x]$
	is a solution to $x\frac{\partial \widetilde{f}_\gamma }{\partial x} - \widetilde{f}_\gamma(x) = \ol{g}(x)$ in $K[a_1, \ldots, a_{r-1},a,t][x]$.
	Lastly, for any $\alpha_1, \ldots, \alpha_{r-1}, \alpha, \tau \in \O_K$
with 
\begin{equation}\label{eq:bounds}
	\alpha \ll Y, \ \ \  \tau \ll Y^{\frac{n - 1}{\deg \gamma}}, \ \ \ \alpha_i \ll Y^i \ (1 \leq i \leq r -1),
\end{equation}
$\widetilde{f}_\gamma(x)|_{\alpha_1, \ldots, \alpha_{r-1},\alpha,\tau}$ 
has height $\ll_\gamma Y$.
\end{lemma}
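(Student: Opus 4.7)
The plan has three parts, matching the three assertions: derive $\widetilde f$ by solving an explicit recursion on coefficients, verify that $\widetilde f_\gamma$ solves the same equation, and then bound the height by exploiting the weighted-homogeneous structure of $\ol g$.

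For the first part, write the sought-after polynomial as $\widetilde f(x) = \sum_{k=0}^n c_k x^k$ with indeterminate coefficients $c_k$. A direct computation gives
\[
x \widetilde f'(x) - \widetilde f(x) = -c_0 + \sum_{k=2}^n (k-1) c_k x^k,
\]
the $x^1$-coefficient vanishing identically. Matching this against $\ol g(x) = \sum_{k=0}^n d_k x^k$ forces $c_0 = -d_0$ and $c_k = d_k/(k-1)$ for $k \geq 2$; the coefficient $c_1$ is unconstrained by the equation but is pinned down to $0$ by the extra condition $\widetilde f'(0) = c_1 = 0$. Since $\ol g = (n-1)(x-a)\ol h^2$ has leading coefficient $n-1$, one gets $c_n = 1$, so $\widetilde f$ is monic of degree $n$. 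The denominators $k-1 \in \{1,\ldots,n-1\}$ all divide $n!$, so $\widetilde f \in \tfrac{1}{n!}\O_K[a_1,\ldots,a_{r-1},a][x]$.

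The second assertion is a one-line check: since $x(\gamma x)' - \gamma x = 0$ as a polynomial identity in $K[a_1,\ldots,a_{r-1},a,t][x]$, we have
\[
x \widetilde f_\gamma'(x) - \widetilde f_\gamma(x) = x \widetilde f'(x) - \widetilde f(x) = \ol g(x).
\]

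The third part is the main content. Assign weights $\mathrm{wt}(x) = \mathrm{wt}(a) = 1$ and $\mathrm{wt}(a_i) = i$. Then every monomial of $\ol h(x) = x^r + \sum_{i=1}^{r-1} a_i x^{r-i} + 2 a_{r-1} a$ has weight $r$, the crucial point being that the substituted constant term $2 a_{r-1} a$ has weight $(r-1)+1 = r$. It follows that $\ol h^2$ is weight-homogeneous of weight $2r = n-1$ and that $\ol g = (n-1)(x-a)\ol h^2$ is weight-homogeneous of weight $n$. Consequently the coefficient $d_k$ of $x^k$ in $\ol g$ is a weight-homogeneous polynomial of weight $n-k$ in $a_1,\ldots,a_{r-1},a$, so the bounds $\|\alpha_i\| \ll Y^i$, $\|\alpha\| \ll Y$ give $\|d_k|_{\alpha_1,\ldots,\alpha}\| \ll Y^{n-k}$, and thus $\|c_k|_{\alpha_1,\ldots,\alpha}\| \ll Y^{n-k}$ for every $k \neq 1$. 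The $x$-coefficient of $\widetilde f_\gamma$ is $\gamma$ itself; writing $\gamma = \sum_j p_j(a_1,\ldots,a_{r-1},a)\, t^j$ and combining the bound $\|\tau\| \ll Y^{(n-1)/\deg \gamma}$ with the fixed polynomial structure of $\gamma$ yields $\|\gamma|_{\alpha_1,\ldots,\alpha,\tau}\| \ll_\gamma Y^{n-1}$. Translating through the definition $\mathrm{ht}(f) = \max_i \|c_i\|^{1/i}$, where $c_i$ is the coefficient of $x^{n-i}$, these estimates give $\mathrm{ht}(\widetilde f_\gamma|_{\alpha_1,\ldots,\alpha,\tau}) \ll_\gamma Y$.

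The step I expect to require the most care is the weighted-homogeneity bookkeeping for $\ol g$: everything in the height bound is built on it, and one must verify that the substitution $a_r \mapsto 2 a_{r-1} a$ is compatible with the weight grading. The other two steps reduce to elementary linear algebra on coefficients and direct substitution.
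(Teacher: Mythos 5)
Your overall route is the same as the paper's: the paper solves the differential equation by differentiating once more to get $x\frac{\partial^2\widetilde f}{\partial x^2}=\frac{\partial\ol g}{\partial x}$ and then recovering the constant and linear terms, which is just a repackaging of your coefficient recursion $c_0=-d_0$, $c_k=d_k/(k-1)$; the second assertion is the same one-line check; and your weighted-homogeneity bookkeeping for the height bound is in fact \emph{more} explicit than the paper, which merely asserts that the bound ``follows straightforwardly from expanding.'' That part of your write-up is fine (modulo the observation, which the paper also elides, that the $\ll_\gamma Y$ bound on the linear coefficient uses that the specific $\gamma$ in play has coefficients of weight at most $n-1$).

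There is, however, one genuine gap, and it sits exactly at the point the whole construction is designed around. Your identity $x\widetilde f'-\widetilde f=-c_0+\sum_{k\ge 2}(k-1)c_kx^k$ shows the left-hand side has \emph{no} $x^1$ term for any choice of $\widetilde f$, so the equation $x\widetilde f'-\widetilde f=\ol g$ is solvable only if $d_1$, the coefficient of $x$ in $\ol g$, vanishes. You record the vanishing of the $x^1$ coefficient on the left, but you never verify the matching consistency condition $d_1=0$ on the right; your recursion only produces a polynomial with $x\widetilde f'-\widetilde f=\ol g-d_1x$, so existence is not yet established. This is precisely why the lemma works with $\ol g$ (i.e.\ after the substitution $a_r\mapsto 2a_{r-1}a$) rather than with $g$ itself: one computes that the coefficient of $x$ in $g=(n-1)(x-a)h^2$ is $(n-1)(a_r^2-2a a_{r-1}a_r)$, which becomes $(n-1)\bigl(4a_{r-1}^2a^2-4a_{r-1}^2a^2\bigr)=0$ upon setting $a_r=2a_{r-1}a$, but is not zero beforehand. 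Adding this one computation closes the gap; everything else in your argument (uniqueness, monicity from $d_n=n-1$, the denominators $k-1\mid n!$, the identity $x(\gamma x)'-\gamma x=0$, and the weight-$n$ homogeneity of $\ol g$) is correct.
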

\begin{proof}
	To show there is a solution to the equation $x \frac{\partial \widetilde{f} }{\partial x} - \widetilde{f}(x) = \ol{g}(x)$,
by differentiating both sides, it suffices to show there is a solution to the equation
$x \frac{\partial^2 \widetilde{f} }{\partial x^2} =  \frac{\partial \ol{g}}{\partial x}$.
For this, we only need check that $\frac{\partial \ol{g}}{\partial x}$ is divisible by $x$.
This holds precisely because the coefficient of $x$ in $g(x)$ is
$(n-1)(a_r^2 - 2a_{r-1}aa_r)$ and the image of this coefficient in the quotient
$\O_K[a_1, \ldots, a_r, a][x]/(a_r - 2a_{r-1} a)$
is $0$.

Since $\frac{\partial^2 \widetilde{f} }{\partial x^2}$ is then uniquely determined, all terms of $\widetilde{f}(x)$ except the linear and constant terms are determined.
The constant term is determined by the equation $x \frac{\partial \widetilde{f} }{\partial x} - \widetilde{f}(x) = \ol{g}(x)$.
Then, any linear term will satisfy the above equation, but the condition $\widetilde{f}'(0) = 0$ uniquely determines the linear term to be $0$. 
Then we find
$x \frac{\partial \widetilde{f}_\gamma }{\partial x} - \widetilde{f}_\gamma(x)  = 
x  \frac{\partial \widetilde{f}}{\partial x} - \widetilde{f}(x) = \ol{g}(x)$.

Monicity of $\widetilde{f}$ follows from the differential equation defining it and the assumption that the leading coefficient of $\overline{g}$ is $n-1$.

Observe that $f(x)$ is a polynomial of degree $n$ and the coefficients of $\frac{\partial^2 \widetilde{f}}{\partial x^2}$ lie in $\O_K[a_1, \ldots, a_{r-1}, a,t]$ since $x\frac{\partial^2 \widetilde{f} }{\partial x^2} = \frac{\partial \ol{g}}{\partial x}$.
It follows that the coefficients of $\widetilde{f}_\gamma$ lie in $\frac{1}{n!} \O_K[a_1, \ldots, a_{r-1}, a,t]$.

Finally, the bound on the height of $\widetilde{f}_{\gamma}(x)|_{\alpha_1, \ldots, \alpha_{r-1},\alpha,\tau}$
follows straightforwardly from expanding $\widetilde{f}_{\gamma}(x)|_{\alpha_1, \ldots, \alpha_{r-1},\alpha,\tau}$.
\end{proof}

Now, for the remainder of the subsection, with $\widetilde{f}$ and $\widetilde{f}_\gamma$
as in \autoref{lemma:height-bound},
define
\begin{align*}
	f(x) &:= n!^n \widetilde{f}(x/n!) \in \O_K[a_1, \ldots, a_{r-1}, a, t][x], \\
	f_\gamma(x) &:= n!^n \widetilde{f}_\gamma(x/n!) \in \O_K[a_1, \ldots, a_{r-1}, a, t][x].
\end{align*}

\begin{lemma}
	\label{lemma:odd-disc-square}
	With notation as above, $\mathrm{Disc}(f_\gamma(x))$ is a square if and only if 
	$(-1)^r\frac{\partial \widetilde{f}_\gamma }{\partial x}(a)$
is a square.
\end{lemma}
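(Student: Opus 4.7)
The plan is to reduce to $\widetilde{f}_\gamma$ rather than $f_\gamma$: since $f_\gamma(x) = (n!)^n \widetilde{f}_\gamma(x/n!)$, the discriminants differ by $(n!)^{n(n-1)}$, which is a square because $n(n-1)$ is even. Hence it suffices to show that $\mathrm{Disc}(\widetilde{f}_\gamma)$ is a square iff $(-1)^r \frac{\partial \widetilde{f}_\gamma}{\partial x}(a)$ is.

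The main identity to exploit is the differential relation
\[
x \frac{\partial \widetilde{f}_\gamma}{\partial x} - \widetilde{f}_\gamma(x) = \ol{g}(x) = (n-1)(x-a) \ol{h}(x)^2
\]
from \autoref{lemma:height-bound}. Applying \autoref{lem:disc-res} to the monic polynomial $\widetilde{f}_\gamma$ of degree $n$,
\[
	\mathrm{Disc}(\widetilde{f}_\gamma) = (-1)^{n(n-1)/2} n^n \prod_{\beta : \widetilde{f}_\gamma'(\beta) = 0} \widetilde{f}_\gamma(\beta),
\]
and at a critical point $\beta$ the relation above gives $\widetilde{f}_\gamma(\beta) = -\ol{g}(\beta) = -(n-1)(\beta-a)\ol{h}(\beta)^2$. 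The squared factor $\ol{h}(\beta)^2$ contributes an obvious square to the product (which one may identify as $\mathrm{Res}(\widetilde{f}_\gamma', \ol{h})^2 / n^{n-1}$ via \autoref{definition:resultant} if one wishes to write it cleanly), while the linear factor $\prod_{\beta}(\beta - a)$ is, up to sign and a power of $n$, precisely $\widetilde{f}_\gamma'(a)$: writing $\widetilde{f}_\gamma'(x) = n \prod_i (x-\beta_i)$ and evaluating at $x = a$ gives $\prod_i (\beta_i - a) = (-1)^{n-1} \widetilde{f}_\gamma'(a)/n$.

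Collecting the contributions yields
\[
	\mathrm{Disc}(\widetilde{f}_\gamma) = (-1)^{n(n-1)/2} (n-1)^{n-1} \, \widetilde{f}_\gamma'(a) \cdot \mathrm{Res}(\widetilde{f}_\gamma', \ol{h})^2,
\]
after the $n^n$ and $n^{n-1} \cdot n$ in the denominator cancel and the two factors of $(-1)^{n-1}$ multiply to $1$. Since $n$ is odd, $n-1 = 2r$ is even, so $(n-1)^{n-1}$ is a perfect square, and $(-1)^{n(n-1)/2} = (-1)^{nr} = (-1)^r$. Therefore $\mathrm{Disc}(\widetilde{f}_\gamma)$ is a square if and only if $(-1)^r \widetilde{f}_\gamma'(a)$ is, proving the lemma.

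The only delicate step is the bookkeeping of signs and powers of $n$ and $n-1$; the real content of the proof is the observation that the differential equation forces $\widetilde{f}_\gamma$ to agree with $-\ol{g}$ at the critical points, and that $\ol{g}$ has been built so that a single linear factor $(x-a)$ is responsible for the nonsquare part of $\mathrm{Disc}(\widetilde{f}_\gamma)$.
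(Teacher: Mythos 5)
Your proof is correct and is essentially the same as the paper's: both hinge on using the relation $x\widetilde{f}_\gamma' - \widetilde{f}_\gamma = \ol{g}$ to evaluate the discriminant, with $\ol{h}^2$ contributing a square and the factor $(x-a)$ producing $\widetilde{f}_\gamma'(a)$ up to the square $(n-1)^{n-1}$ and the sign $(-1)^{n(n-1)/2}=(-1)^r$. The paper merely packages the same computation as a chain of resultant identities ($\mathrm{Res}(f+hg,g)=\mathrm{Res}(f,g)$ and multiplicativity) rather than as an explicit product over the critical points.
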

\begin{proof}
By \autoref{lem:disc-res}, the discriminant of $f_\gamma$ is a square if and only if the discriminant of $\widetilde{f}_\gamma$ is a square. 
Therefore, we will show the discriminant of $\widetilde{f}_\gamma$ is a square if and only if $(-1)^r\frac{\partial \widetilde{f}_\gamma }{\partial x}(a)$ is. 
Indeed, we have
\begin{align*}
	\mathrm{Disc}(\widetilde{f}_\gamma )
	&= (-1)^{n(n-1)/2} \mathrm{Res}\left(\widetilde{f}_\gamma ,\frac{\partial \widetilde{f}_\gamma }{\partial x}\right) &\text{by Definition }\ref{definition:resultant}\\
	&= (-1)^{r} \mathrm{Res}\left(x\frac{\partial \widetilde{f}_\gamma }{\partial x}- \ol{g}(x),\frac{\partial \widetilde{f}_\gamma }{\partial x}\right) \\
	&= (-1)^{r} \mathrm{Res}\left(\ol{g}(x),\frac{\partial \widetilde{f}_\gamma }{\partial x}\right) &\text{by Corollary }\ref{corollary:sum-resultant}\\
	&= (-1)^{r} \mathrm{Res}\left(\ol{h}(x),\frac{\partial \widetilde{f}_\gamma }{\partial x}\right)^2 \cdot
	\mathrm{Res}\left(\frac{\partial \widetilde{f}_\gamma }{\partial x}, (n - 1)(x - a)\right) 
	&\text{by Corollaries }\ref{corollary:switch-resultant}\text{ and }\ref{corollary:sum-resultant}\\
	&= (-1)^{r} 
	 \left( \prod_{\ol{h}(\beta) = 0} \frac{\partial \widetilde{f}_\gamma }{\partial x}(\beta) \right)^2  \cdot
	(n - 1)^{n - 1}  \frac{\partial \widetilde{f}_\gamma }{\partial x}(a).
	&\text{by \eqref{eq:result}}
\end{align*}
\end{proof}

The analogue of \autoref{lemma:even-hilbert} is the following.

\begin{lemma}\label{lem:odd-specialization}
Let $\widetilde{P} \in \Q[x]$ be of odd degree $n$, satisfying the differential equation
\[
x\frac{\partial \widetilde{P}}{\partial x} - \widetilde{P} = \overline{G}, \ \ \ \ \overline{G} = (n - 1)(x - \alpha)(x - \beta_1)^2 (x - \beta_2)^2 \cdots (x - \beta_r)^2,
\]
where the $\beta_i$ are all distinct positive rational numbers,
and $2\alpha\sum_i \beta_i^{-1} := -1$.
Then, for a suitable choice of the $\beta_i$,
the values $\frac{\partial \widetilde{P}}{\partial x}(\beta_1), \ldots, \frac{\partial \widetilde{P}}{\partial x}(\beta_r), \frac{\partial \widetilde{P}}{\partial x}(\alpha)$ are
pairwise distinct.

Further, for any number field $K$ and any $\delta \in K^\times$, setting $\gamma :=  (-1)^r \delta t^2 -\frac{\partial \widetilde{P}}{\partial x}(\alpha)$, 
the Galois group of splitting field of $\widetilde{P}_\gamma := \widetilde{P} + \big((-1)^r \delta t^2 - \frac{\partial \widetilde{P}}{\partial x}(\alpha)\big)x$ over $K(t)$ is $A_n$ if $\delta$ is a square and $S_n$ otherwise.  If $\delta$ is not a square, then the Galois group of $\widetilde{P}_\gamma$ over $K(\sqrt{\delta})(t)$ is $A_n$.
\end{lemma}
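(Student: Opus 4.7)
My plan is to mirror the proof of \autoref{lemma:even-hilbert}, establishing that the Galois group $G$ of the splitting field of $\widetilde{P}_\gamma$ over $\Q(t)$ is (a) contained in $A_n$, (b) transitive, and (c) generated by $3$-cycles; together these yield $G=A_n$. The distinctness of the values $\widetilde{P}'(\alpha),\widetilde{P}'(\beta_1),\ldots,\widetilde{P}'(\beta_r)$ is a Zariski-open condition on the $\beta_i$ (with $\alpha$ determined by $2\alpha\sum_i\beta_i^{-1}=-1$), so it can be arranged by a sufficiently generic choice of positive rational $\beta_i$; for instance, one may fix the relation and perturb the remaining degrees of freedom.

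For (a), with $\gamma=(-1)^r t^2-\widetilde{P}'(\alpha)$ one has $\frac{\partial\widetilde{P}_\gamma}{\partial x}(\alpha)=\widetilde{P}'(\alpha)+\gamma=(-1)^r t^2$, so $(-1)^r\frac{\partial\widetilde{P}_\gamma}{\partial x}(\alpha)=t^2$ is a square in $\Q(t)$; by \autoref{lemma:odd-disc-square}, $\Disc(\widetilde{P}_\gamma)$ is a square, and hence $G\leq A_n$. For (b), regard $\widetilde{P}_\gamma=(-1)^r x\,t^2+(\widetilde{P}(x)-\widetilde{P}'(\alpha)x)$ as a quadratic in $t$ over $\Q(x)$; its $t$-discriminant $-4(-1)^r x(\widetilde{P}(x)-\widetilde{P}'(\alpha)x)$ contains $x$ as a simple factor, since evaluating $x\widetilde{P}'-\widetilde{P}=\overline{G}$ at $0$ gives $\widetilde{P}(0)=-\overline{G}(0)=(n-1)\alpha\prod_i\beta_i^2\neq 0$. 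So the $t$-discriminant is not a square in $\Q(x)$, and $\widetilde{P}_\gamma$ is irreducible in $\Q(x)[t]$; Gauss's lemma (using primitivity in $\Q[x][t]$ and monicity in $x$) then gives irreducibility in $\Q(t)[x]$, so $G$ is transitive.

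For (c), I would analyze ramification: a prime $(t-t_0)$ ramifies in the splitting field iff $\widetilde{P}_\gamma|_{t=t_0}$ has a repeated root at some $x_0$. The conditions $\widetilde{P}_\gamma(x_0)=\widetilde{P}_\gamma'(x_0)=0$ force $\gamma|_{t=t_0}=-\widetilde{P}(x_0)/x_0=-\widetilde{P}'(x_0)$, hence $\overline{G}(x_0)=x_0\widetilde{P}'(x_0)-\widetilde{P}(x_0)=0$ and $x_0\in\{\alpha,\beta_1,\ldots,\beta_r\}$. At $x_0=\alpha$ one gets $t=0$; differentiating the defining equation yields $x\widetilde{P}''=\overline{G}'$, so $\widetilde{P}''(\alpha)=\overline{G}'(\alpha)/\alpha\neq 0$, making $\alpha$ a double root with all other roots simple by distinctness. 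The corresponding inertia is thus at most generated by a transposition, and being contained in $A_n$ it is trivial. At $x_0=\beta_i$ one has $(-1)^r t^2=\widetilde{P}'(\alpha)-\widetilde{P}'(\beta_i)$; iterating $x\widetilde{P}''=\overline{G}'$ gives $\widetilde{P}''(\beta_i)=0$ but $\widetilde{P}'''(\beta_i)=\overline{G}''(\beta_i)/\beta_i=2(n-1)(\beta_i-\alpha)h'(\beta_i)^2/\beta_i\neq 0$, so $\beta_i$ is a triple root with all other roots simple by distinctness, and the inertia is trivial or a $3$-cycle. Since the algebraic closure of $\Q(t)$ admits no further unramified extensions, $G$ is generated by these inertia subgroups, hence by $3$-cycles; combined with transitivity, this gives $G=A_n$.

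I expect the main obstacle to be the multiplicity computation at $x_0=\beta_i$: one must carefully iterate $x\widetilde{P}''=\overline{G}'$ and exploit the factored form $\overline{G}=(n-1)(x-\alpha)h^2$ to establish the exact order of vanishing of $\widetilde{P}_\gamma$ at $\beta_i$, and separately rule out additional repeated roots at each ramified prime using the pairwise distinctness hypothesis.
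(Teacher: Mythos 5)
Your treatment of the Galois-theoretic core --- containment in $A_n$ via \autoref{lemma:odd-disc-square} (since $(-1)^r\frac{\partial\widetilde{P}_\gamma}{\partial x}(\alpha)=t^2$), transitivity by viewing $\widetilde{P}_\gamma$ as a quadratic in $t$ over $\Q(x)$ whose $t$-discriminant is divisible by $x$ exactly once because $\widetilde{P}(0)=-\overline{G}(0)\neq 0$, and generation by $3$-cycles via the ramification analysis at $(t)$ and at $t^2=(-1)^r\big(\widetilde{P}'(\alpha)-\widetilde{P}'(\beta_i)\big)$ --- is correct and follows the paper's argument essentially step for step; your explicit computation of $\widetilde{P}''$ and $\widetilde{P}'''$ at the critical points via $x\widetilde{P}''=\overline{G}'$ is, if anything, more detailed than what the paper writes down.

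The gap is in the first assertion of the lemma, which you dispose of by saying that pairwise distinctness of $\widetilde{P}'(\alpha),\widetilde{P}'(\beta_1),\ldots,\widetilde{P}'(\beta_r)$ is a Zariski-open condition and hence holds for a generic choice. Openness is not enough: a Zariski-open locus can be empty, and to invoke genericity you must show it is nonempty, i.e.\ that none of the differences $\widetilde{P}'(\beta_i)-\widetilde{P}'(\beta_j)$ or $\widetilde{P}'(\alpha)-\widetilde{P}'(\beta_i)$ vanishes identically as a function of the parameters. This is precisely where the paper does real work: it shows $\widetilde{P}'(\beta_i)\neq\widetilde{P}'(\beta_j)$ for \emph{every} choice of distinct positive $\beta_i$ by noting $\overline{G}(\beta_i)=0$ forces $\widetilde{P}'(\beta_i)=\widetilde{P}(\beta_i)/\beta_i$ and then writing $\widetilde{P}'(\beta_i)-\widetilde{P}'(\beta_j)=\int_{\beta_j}^{\beta_i}\overline{G}(x)x^{-2}\,dx>0$ (using $\alpha<0$, so $\overline{G}>0$ on the positive axis), and it separates $\widetilde{P}'(\alpha)$ from the $\widetilde{P}'(\beta_i)$ by an explicit sign computation at the degenerate point $\beta_1=\cdots=\beta_r=1$, where $\widetilde{P}(x)=(x-1)^n-nx$ gives $\widetilde{P}'(1)=-n<0<\widetilde{P}'(\alpha)$, followed by a small perturbation. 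Without some such nonvanishing verification your proof of the first claim is incomplete, and the distinctness is then used essentially in your own ramification step to rule out additional repeated roots at each ramified prime.
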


\begin{proof}
	First, we check $\frac{\partial \widetilde{P}}{\partial x}(\beta_1), \ldots, \frac{\partial \widetilde{P}}{\partial x}(\beta_r)$ 
		are pairwise distinct, following \cite[p. 127]{Hilbert}. 
By assumption that all $\beta_i > 0$, we find $\alpha < 0$ and hence $\overline{G}(x) > 0$ for $x$ positive. 	
Thus, for $\beta_i > \beta_j$ we have
\[
\frac{\partial \widetilde{P}}{\partial x}(\beta_i)
- 
\frac{\partial \widetilde{P}}{\partial x}(\beta_j)
= \frac{\widetilde{P}(\beta_i)}{\beta_i} - \frac{\widetilde{P}(\beta_j)}{\beta_j} 
= 
\int_{\beta_j}^{\beta_i} \frac{x \frac{\partial \widetilde{P}}{\partial x} - \widetilde{P}(x)}{x^2} dx
=\int_{\beta_j}^{\beta_i} \frac{\overline{G}(x)}{x^2} dx
> 0.
\]

We now check that, for a suitable choice of the $\beta_i$, we have $\frac{\partial \widetilde{P}}{\partial x}(\alpha) \neq
 \frac{\partial \widetilde{P}}{\partial x}(\beta_i)$ for each $i$. To do this, formally set $\beta_i = 1$ for each $i$, in which case we
 have $\widetilde{P}(x) = (x - 1)^n - nx - 1$. Then $\frac{\partial \widetilde{P}}{\partial x}(\beta_i) = \frac{\partial \widetilde{P}}{\partial x}(1) = -n$ is negative,
 while 
 \[
 \frac{\partial \widetilde{P}}{\partial x}(\alpha) = \frac{\partial \widetilde{P}}{\partial x}\left(\frac{-1}{n - 1}\right) =
 n \left[ \left(\frac{-n}{n - 1}\right)^{n - 1} - 1\right] > 0
 \]
 is positive. By continuity, these signs will persist if the $\beta_i$ are perturbed slightly, so that it suffices to take
 the $\beta_i$ all sufficiently close to $1$.

We next check the Galois group of the splitting field of $\widetilde{P}_\gamma$
over $\mathbb{C}(t)$ is $A_n$.
As in \autoref{lemma:even-hilbert}, we follow \cite{sellares:realizing-sn-and-an-as-galois-groups} and correct a sign error.
Using \autoref{lemma:odd-disc-square}, $\Disc(\widetilde{P_\gamma})$ is a square, so the proof is reduced to showing that all the inertia groups are $3$-cycles and that the Galois group acts transitively on the $n$ roots of
$\widetilde{P}_\gamma$ over an algebraic closure.

By our discriminant computation in \autoref{lemma:odd-disc-square}, the ramified prime ideals are 
\[
(t), \ \ \ \Big(t\sqrt{\delta} \pm \sqrt{ (-1)^r \big(\frac{\partial \widetilde{P}}{\partial x}(\alpha) - \frac{\partial \widetilde{P}}{\partial x}(\beta_i)\big)}\Big).
\]
We now argue as in the even case. Modulo $(t)$, $\widetilde{P}$ has a double root at $x=\alpha$ and no other repeated roots; therefore, the corresponding
inertia group is either trivial or generated by a transposition and so must be trivial. Modulo any of the remaining ramified prime ideals, $\widetilde{P}$ has a triple root at $x = \beta_i$ and 
no other repeated roots, and the inertia group is trivial or cyclic of order $3$.

To complete the proof, we show the Galois group acts transitively on the $n$ roots of $\widetilde{P}_\gamma$ over an algebraic closure.
As a polynomial in $\mathbb{C}(x)[t]$, $\widetilde{P}_\gamma$ is reducible if and only if $(-1)^r\left(\frac{\partial \widetilde{P}}{\partial x}(\alpha)-\frac{1}{x}\widetilde{P}\right)$ is a square in $\mathbb{C}(x)$.  However, $\widetilde{P}(0) = -G(0) \neq 0$, so $\widetilde{P}$ is not divisible by $x$; consequently, $\widetilde{P}_\gamma$ is irreducible in $\mathbb{C}(x)[t]$.  Since $t$ appears only in the linear term in $\widetilde{P}_\gamma$ and since $x \nmid \widetilde{P}_\gamma$, we find that $\widetilde{P}_\gamma$ is irreducible in $\mathbb{C}(t)[x]$ as well and that the Galois group acts transitively on the $n$ roots of $\widetilde{P}_\gamma$ over an algebraic closure.

Lastly, as in the even case, this shows for any finite extension $L/K$ that the
Galois group of $\widetilde{P}_\gamma$ over $L(t)$ contains $A_n$.  The
remaining conclusions follow from \autoref{lemma:odd-disc-square} and the fact
that a degree $n$ extension whose Galois group contains $A_n$ is $A_n$ if and
only if it has square discriminant.
\end{proof}

Analogously to \autoref{lemma:even-big-galois}, we obtain the following:
\begin{lemma}
	Let $\delta \in K^\times$.  The polynomial $f_{(-1)^r \delta t^2 - \frac{\partial \widetilde{f} }{\partial x}(a)}(x) \in K(a_1,\dots,a_{r-1},a,t)[x]$ has Galois group $A_n$ if $\delta$ is a square and $S_n$ otherwise.  If $\delta$ is not a square, then $f_{(-1)^r \delta t^2 - \frac{\partial \widetilde{f} }{\partial x}(a)}(x)$ has Galois group $A_n$ over $K(\sqrt{\delta})(a_1,\dots,a_{r-1},a,t)$.
\end{lemma}
\begin{proof}
	Let $\gamma := (-1)^r \delta t^2 - \frac{\partial \widetilde{f} }{\partial x}(a)$.
	The Galois group of $f_{\gamma}(x)$ equals that of $\widetilde{f}_{\gamma}(x)$. Supposing first that $\delta$ is a square, this latter Galois group
	is contained in $A_n$ because
	the discriminant of $\widetilde{f}_{\gamma}(x)$ is a square by \autoref{lemma:odd-disc-square}, and it contains $A_n$ because the specialization of \autoref{lem:odd-specialization}
	has Galois group $A_n$.  The case that $\delta$ is not a square is analogous.
	\end{proof}

\subsubsection{Completing the proof of \autoref{thm:an-lower-bound} and \autoref{thm:sn-lower-bound}
for odd $n$}
\label{subsubsection:odd-proof}

For \autoref{thm:an-lower-bound}, we take $\delta =1$ and let our parameters vary over all integer values in the ranges \eqref{eq:bounds}.
We take $\gamma = (-1)^r t^2 - \frac{\partial \widetilde{f} }{\partial x}(a)$,
so $\| \tau \| \ll_\delta Y^{(n - 1)/2}$. 
We thus make $\gg Y^{\frac{d(n^2+7)}{8}}$ choices of these parameters.
Next, analogously to the case that $n$ is even described in
\autoref{subsubsection:even-proof},
for any fixed polynomial $q \in K[x]$, there are at most $\deg \gamma = 2$ possible values of $(\alpha_1, \ldots, \alpha_{r-1}, \alpha, \tau)$ so that 
$f_\gamma|_{\alpha_1, \ldots, \alpha_{r-1}, \alpha, \tau}= q$.

At this stage, we now proceed as in the even case. Applying \autoref{thm:hit}, we have
\[
	\#\{ \alpha_1, \dots, \alpha_{r-1}, \alpha, \tau \in \O_K : \mathrm{ht}(f|_{\alpha_1, \ldots, \alpha_{r-1}, \alpha, \tau}) \ll Y, \mathrm{Gal}(f|_{\alpha_1, \ldots, \alpha_{r-1}, \alpha, \tau} /K) \simeq A_n\}
	\gg Y^{\frac{d(n^2+7)}{8}}.
\]
Therefore, in the notation of \eqref{def:gk}, we have $\GnK{A_n}{Y} \gg_K Y^\frac{d(n^2+7)}{8}$,
and hence by taking $C = (n^2+7)/8$ in \autoref{cor:reduction-schmidt}
we conclude that  
\[
	\NnK{A_n}{X}
		\gg X^{\frac{(n - 7)(n + 2)}{8n^2}}.
\]
As in the even case, to obtain \autoref{thm:sn-lower-bound}, we instead take $\delta$ to be any integral element for which $M = K(\sqrt{\delta})$ and proceed as above.

\subsection{Minor improvements under stronger hypotheses}
As we observed in \autoref{prop:reduction}, 
our lower bounds can be improved slightly with improvements in the {\itshape upper} bounds,
which leads to the following result:

\begin{proposition}\label{prop:best-possible} \hspace{1in}
	\begin{enumerate}
		\item Let $n\geq 6$ be even and suppose that $\NnK{A_n}{X} \ll X^{\frac{n^2 - 2n + 8}{8n}}$.  Then 
\[
	\NnK{A_n}{X} \gg X^{\frac{n^2 - 2n + 8}{8(n^2 - n)}}.
\]
In particular, this holds unconditionally with $K = \Q$ for any $n \geq 430$.
\item Let $n \geq 7$ be odd and suppose that $\NnK{A_n}{X} \ll X^{\frac{n^2 - 4n + 7}{8n}}$.  Then
\[
	\NnK{A_n}{X} \gg X^{\frac{n^2 - 4n + 7}{8(n^2 -n)}}.
\]
In particular, this holds unconditionally  with $K = \Q$ for any $n \geq 433$.
	\end{enumerate}
\end{proposition}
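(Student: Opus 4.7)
The plan is simply to re-run the proof of \autoref{thm:an-lower-bound} but invoke \autoref{prop:reduction} with the improved upper bound exponent $e$ in place of the Schmidt exponent $e = (n+2)/4$ used in \autoref{cor:reduction-schmidt}. All of the field-counting work — constructing $A_n$-polynomials from Hilbert's family and counting specializations via Hilbert irreducibility — has already been done in \autoref{subsubsection:even-proof} and \autoref{subsubsection:odd-proof}. The key inputs we reuse are the lower bounds $\#\GnK{A_n}{Y} \gg_K Y^{dC}$ with $C = (n^2 + 2n + 8)/8$ in the even case and $C = (n^2 + 7)/8$ in the odd case.

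First I would check in each case which branch of \autoref{prop:reduction} applies. In the even case with $e = (n^2 - 2n + 8)/(8n)$, a short computation gives
\[
n\bigl(e + \tfrac{1}{2}\bigr) = \frac{n^2 - 2n + 8}{8} + \frac{n}{2} = \frac{n^2 + 2n + 8}{8} = C,
\]
so $C \geq n(e + \tfrac12)$ and we land in the first branch of \autoref{prop:reduction}. In the odd case with $e = (n^2 - 4n + 7)/(8n)$,
\[
n\bigl(e + \tfrac{1}{2}\bigr) = \frac{n^2 - 4n + 7}{8} + \frac{n}{2} = \frac{n^2 + 7}{8} = C,
\]
so again we are on the boundary of the first branch. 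In both cases the upper bound hypothesis has been tuned precisely to make $C = n(e + \tfrac12)$, which is the threshold at which the first branch of \autoref{prop:reduction} becomes optimal.

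Applying the first branch now yields
\[
\NnK{A_n}{X} \gg X^{(C - n/2)/(n^2 - n)}.
\]
Plugging in $C = (n^2 + 2n + 8)/8$ in the even case gives the exponent $(n^2 - 2n + 8)/(8(n^2 - n))$, and plugging in $C = (n^2 + 7)/8$ in the odd case gives $(n^2 - 4n + 7)/(8(n^2 - n))$, which are the claimed bounds.

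There is no real obstacle here: the proposition is essentially a bookkeeping observation that the hypothesized upper bounds are exactly the ones that saturate the first branch of \autoref{prop:reduction} against the $C$ produced by the Hilbert construction. The only thing to double-check is that, as in the proofs of \autoref{thm:an-lower-bound}, the hypotheses $e \geq 1/(n-1)$ and $C > n$ of \autoref{prop:reduction} still hold with the new values of $e$; both are immediate for $n \geq 6$ since $e$ grows linearly in $n$ and $C$ grows quadratically in $n$.
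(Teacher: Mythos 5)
Your proposal is correct and matches the paper's own (only sketched) argument: the hypothesized upper bounds are tuned exactly so that $C = n\bigl(e + \tfrac12\bigr)$, and the first branch of \autoref{prop:reduction}, applied with $C = (n^2+2n+8)/8$ in the even case and $C = (n^2+7)/8$ in the odd case, yields the stated exponents. One small caveat: at $n=7$ one has $C = (n^2+7)/8 = 7 = n$, so your claim that $C > n$ is ``immediate'' fails there; this does not affect the conclusion, since the first branch of \autoref{prop:reduction} does not actually use $C>n$, but it is worth noting rather than asserting.
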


We briefly indicate the idea behind \autoref{prop:best-possible} and omit a detailed proof.
Assume the stated upper bounds for $\NnK{A_n}{X}$ as in \autoref{prop:best-possible}(1) and (2).
These upper bounds were precisely constructed so that
$C
\geq n(e + 1/2)$ for $C$ and $e$ as in \eqref{eq:reduction}.
Then, following the same line of reasoning as in the proof of \autoref{thm:an-lower-bound}
(in particular, the reasoning in \autoref{subsubsection:even-proof} and \autoref{subsubsection:odd-proof})
leads to \autoref{prop:best-possible} after some elementary arithmetic.

Finally, the two hypothesized upper bounds on $N_{n, \mathbb{Q}}(A_n,X)$ hold for $n \geq 430$ and $n\geq 433$ by \cite[Theorem 1.2]{LOT} 
and an explicit computation (whose details we also omit). We may also obtain bounds for $K \neq \Q$ by counting degree $n$ extensions of $K$
as degree $n[K : \Q]$ extensions of $\Q$.


\appendix
\section{Hilbert Irreducibility}
\label{appendix:hilbert}

In the course of our proof of \autoref{thm:an-lower-bound}, we apply Hilbert irreducibility
to families of polynomials over a function field $K(a_1,\dots,a_r)$. 
We use a form of Hilbert irreducibility applied to counting polynomials in a box with varying edge lengths,
in a box which is not a hypercube.

Although it is well known to experts, we could not find an explicit statement of this particular form of Hilbert irreducibility.
For completeness, we include a proof
following the method of \cite[\S13]{Serre}.\footnote{
Serre does remark that his proof yields a uniform bound for the number of points
in {\itshape every} box of fixed diameter, which does suffice for our claimed statement, as was pointed out on
\cite[p. 733]{EllenbergVenkatesh}.}

\begin{definition}
	\label{definition:}
	For $K$ a number field, and $x \in K$, 
	as in \autoref{subsection:notation},
	define $\|x\| := \max_{\sigma} |\sigma(x)|$ as $\sigma$ ranges over all embeddings $K \rightarrow \mathbb C$ and $|\sigma(x)|$ denotes the complex norm.
For $S \subset \mathbb A^r(\O_K) = \O_K^r$, and positive real numbers $e_1, \ldots, e_r$, define 
\begin{align*}
S(T;e_1, \ldots, e_r) := \{ (a_1, \ldots, a_r) \in S : \|a_i\| \leq T^{e_i} \}
\end{align*}
and define
\begin{align*}
p_S(T;e_1, \ldots, e_r) := \frac{ \# S(T;e_1, \ldots, e_r)}{\# (\mathbb A^n(\O_K))(T;e_1, \ldots, e_r)}
\end{align*}
to be the proportion of points of $\mathbb A^n(\O_K))$ with $i$th coordinate
less than $T^{e_i}$ lying in $S$.
\end{definition}

To state the upcoming theorem, we now set some notation.
Let $K$ be a number field and let
	$F(a_1, \ldots, a_r, x) \in \O_K[a_1,\dots,a_r][x]$ be an element with Galois group $G$ when viewed as a polynomial in $x$ over $K(a_1,\dots,a_r)$. 
Let $S \subset \mathbb A^r(\O_K)$ denote the set of choices of integral elements $(\alpha_1, \ldots, \alpha_r) \in \mathbb A^r(\O_K)$ so that 
the image of $F$ in $\O_K[a_1,\dots,a_r][x]/(a_1 - \alpha_1, \ldots, a_r - \alpha_r)$
has Galois group $G$.
	
\begin{theorem}[Hilbert irreducibility]\label{thm:hit}
With notation as above, for $e_1, \dots, e_r \in \mathbb R_{> 0}$, we have
	$\lim_{T \rightarrow \infty} p_S(T;e_1, \ldots, e_r) = 1$.
\end{theorem}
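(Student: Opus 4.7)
The plan is to combine Serre's proof of Hilbert irreducibility, which furnishes a uniform upper bound for thin sets inside an arbitrary hypercube, with a simple tiling argument to pass from hypercubes to the rectangular boxes appearing in the statement. The complement
\[
  B := \{(\alpha_1,\ldots,\alpha_r) \in \O_K^r : \Gal(F|_{\alpha_1,\ldots,\alpha_r}/K) \subsetneq G\}
\]
of $S$ in $\O_K^r$ is a thin subset of $K^r$ in the sense of \cite[\S9.1]{Serre}: specialization drops the Galois group precisely when $(\alpha_1,\ldots,\alpha_r)$ lies in the image of one of the finitely many covers $V_H \to \mathbb{A}^r_K$ of degree $\geq 2$ corresponding to the conjugacy classes of proper maximal subgroups $H < G$, and each such image is type-$2$ thin by construction. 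Choosing a $\Z$-basis of $\O_K$ identifies $\O_K^r$ with $\Z^{dr}$, and under this identification $B$ maps into a thin subset of $\Z^{dr}$, since restriction of scalars from $K$ to $\Q$ preserves thinness.

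Next, invoke the uniform hypercube bound. Serre's sieve \cite[\S13.1]{Serre}, together with the uniformity observation recorded on \cite[p.~773]{EllenbergVenkatesh}, shows that for any thin subset $B' \subset \Z^n$ there is a constant $C = C(B')$ such that, for every $H \geq 2$ and every axis-aligned hypercube $\square \subset \mathbb{R}^n$ of side length $H$,
\[
  \#(B' \cap \square \cap \Z^n) \leq C \cdot H^{n-1/2} \log H.
\]
Set $e_{\min} := \min_i e_i$. Viewing $\O_K^r$ as a lattice in $\mathbb{R}^{dr}$ via the Minkowski embedding, the region $\{\|a_i\| \leq T^{e_i}\}$ is a product of cubes and can be tiled by $\asymp T^{d(\sum_j e_j - r e_{\min})}$ hypercubes of side length $T^{e_{\min}}$ in $\mathbb{R}^{dr}$. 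Applying the uniform bound to each with $n = dr$ yields at most $O\bigl(T^{e_{\min}(dr - 1/2)} \log T\bigr)$ bad points per hypercube, so
\[
  \#B(T; e_1, \ldots, e_r) \ll T^{d\sum_j e_j - e_{\min}/2} \log T,
\]
while the full count of lattice points in the same region is $\asymp T^{d\sum_j e_j}$. Hence $p_S(T; e_1,\ldots,e_r) \geq 1 - O(T^{-e_{\min}/2}\log T) \to 1$ as $T \to \infty$, as required.

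The only nontrivial input is the uniform hypercube bound itself, i.e. the fact that Serre's constant depends on the thin set but not on the position of the hypercube; once this is in hand, the tiling computation and the reduction of ``Galois group drops under specialization'' to membership in a thin set are both standard. The main obstacle, therefore, is locating (rather than proving) the uniform form of Serre's theorem, which is why the footnote in the statement is needed.
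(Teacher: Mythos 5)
Your argument is correct, and it is essentially the route alluded to in the paper's footnote rather than the one carried out in the paper's proof. The paper's written proof never invokes the uniform $H^{n-1/2}\log H$ hypercube bound: after reducing to a thin set $M$, it uses only the mod-$\mathfrak p$ input of Serre's sieve (the bound $\# M_{\mathfrak p} \leq c_\pi N(\mathfrak p)^r + O(N(\mathfrak p)^{r-1/2})$ for primes splitting completely in an auxiliary field $K_\pi$), combines finitely many such primes by the Chinese remainder theorem, and lets Chebotarev supply infinitely many primes so that the product of densities $\prod_{\mathfrak p} \delta_{\mathfrak p}$ can be made arbitrarily small. This works directly in the rectangular box because equidistribution modulo a fixed modulus holds in any box all of whose side lengths tend to infinity; no tiling is needed, and no uniformity in the position of a hypercube is required. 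Your approach instead takes the uniform-in-position large sieve bound as a black box, passes to $\Z^{dr}$ by restriction of scalars, and tiles (really, covers) the box $\{\|a_i\|\leq T^{e_i}\}$ by cubes of side $T^{e_{\min}}$; your exponent bookkeeping checks out and yields $p_S = 1 - O(T^{-e_{\min}/2}\log T)$, a power-saving rate that the paper's qualitative argument does not provide. The trade-off is exactly the one you identify: your proof leans on a uniform version of Serre's theorem that is not explicitly stated in the standard references (only remarked upon by Serre and on p.~773 of Ellenberg--Venkatesh), whereas the paper's proof is self-contained from the weaker mod-$\mathfrak p$ statement at the cost of giving no error term. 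One cosmetic point: under the Minkowski embedding the region $\{\|a_i\|\leq T^{e_i}\}$ is a product of boxes and discs rather than a union of lattice translates of a cube, so you should say ``covered by'' rather than ``tiled by''; since you only need an upper bound on the bad set and count the full lattice separately, this changes nothing.
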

\begin{proof}
By \cite[\S9.2, Proposition 2]{Serre}, the set of exceptions $(a_1, \dots, a_r) \in \mathbb A^r(\O_K) = \O_K^r$ belong to a {\itshape thin set}. 
Recall that a thin set in $\mathbb A^r$ can be described geometrically as a subset $\Omega \subset \mathbb A^r(K)$ so that there exists
some generically quasi-finite $\pi\colon X \rightarrow \mathbb A^r$ with $\Omega \subset \pi(X(K))$ and 
so that each irreducible component of $X$ which dominates $\mathbb A^r$ maps to $\mathbb A^r$ with degree at least $2$.

Hence, it suffices to prove that for $M$ the intersection of a thin set with 
$\O_K^r$, 
\begin{align*}
\lim_{T \rightarrow \infty} p_M(T;e_1, \ldots, e_r) = 0.
\end{align*}
Let $M = \O_K^r \cap \pi(X(K))$ be our specified thin set, for $\pi: X \rightarrow \mathbb A^r$ as in the definition of thin set above.
If $M$ is contained in the image of $X(K)$ it suffices to prove the statement for each of the irreducible components of $X$ separately, and we henceforth assume $X$ is irreducible.

We first consider the more difficult case when $X$ dominates $\mathbb A^r$, in which case $X \rightarrow \mathbb A^r$ has degree at least $2$.
For $\mathfrak p \subset \O_K$ a prime, let $M_{\mathfrak p}$ denote the reduction $M \bmod \mathfrak p$, viewed as a subset of $(\O_K/\mathfrak p)^r$, and let
$N(\mathfrak p) \in \mathbb Z$ denote the norm of the ideal $\mathfrak p$.
By \cite[Theorem 5, \S13.2]{Serre}, there is a finite Galois extension $K_\pi/K$ and a constant $c_{\pi} < 1$ with the following property:
For each prime $\mathfrak p \subset \O_K$ which splits completely in
$K_\pi$,  we have $\# M_{\mathfrak p} \leq c_\pi N(\mathfrak p)^r + O(N(\mathfrak p)^{r - 1/2})$.
In particular, for all primes $\mathfrak p$ of sufficiently large norm which split
completely in $K_\pi$, the image $M_{\mathfrak p}$ in $(\O_K/\mathfrak p)^r$
has density 
$\delta_{\mathfrak p}
=
\frac{c_\pi N(\mathfrak p)^r + O(N(\mathfrak p)^{r - 1/2})}{N(\mathfrak p)^r}$, 
which is bounded away from $1$.

Let $\mathcal{S}$ be the set of such primes $\mathfrak p$ which are sufficiently large in the above sense and which split completely in $K_\pi$. For any finite subset $\mathcal{S}' \subseteq \mathcal{S}$,
it follows from the Chinese remainder theorem that
$p_M(T;e_1, \ldots, e_r)$ is bounded above by
\begin{equation}\label{eq:chebo_setup}
	\prod_{\mathfrak p \in \mathcal{S}'} \delta_{\mathfrak p} + o_{T, \mathcal{S}'}(1).
\end{equation}
Since $\mathcal{S}$ contains infinitely many primes by the Chebotarev density theorem, and each $\delta_{\mathfrak p}$ is bounded away from $1$, 
the product in \eqref{eq:chebo_setup} may be taken 
arbitrarily close to zero, proving the theorem in the case that $X \rightarrow \mathbb A^r$ is dominant.

If $\pi\colon X \rightarrow \mathbb A^r$ is not dominant, 
then $\pi(X)$ must instead be contained in some Zariski closed subset of $\mathbb A^r$,
so it suffices to deal with the case that $X \subset \mathbb A^r$ is Zariski closed.
The proof in this case is analogous to the case that $\pi\colon X \rightarrow \mathbb A^r$ is dominant,
and we even obtain the stronger bound that $M_{\mathfrak p}$ has at most $N(\mathfrak p)^{r-1} + O(N(\mathfrak p)^{r - 3/2})$ elements.
The rest of the argument then goes through analogously, since the associated 
densities
$\delta_{\mathfrak p} := \frac{N(\mathfrak p)^{r-1} + O(N(\mathfrak p)^{r - 3/2})}{N(\mathfrak p)^r}$ 
satisfy $\delta_{\mathfrak p} < 1$ for all primes $\mathfrak p$ of sufficiently
large norm.
\end{proof}

\bibliographystyle{alpha}
\bibliography{an-refs}

\end{document}